\begin{document}

\newtheorem{thm}{Theorem}
\newtheorem{lem}[thm]{Lemma}
\newtheorem{claim}[thm]{Claim}
\newtheorem{cor}[thm]{Corollary}
\newtheorem{prop}[thm]{Proposition} 
\newtheorem{definition}[thm]{Definition}
\newtheorem{rem}[thm]{Remark} 
\newtheorem{question}[thm]{Question}
\newtheorem{conj}[thm]{Conjecture}
\newtheorem{prob}{Problem}

\newtheorem{lemma}[thm]{Lemma}

\newcommand{\GL}{\operatorname{GL}}
\newcommand{\SL}{\operatorname{SL}}
\newcommand{\lcm}{\operatorname{lcm}}
\newcommand{\ord}{\operatorname{ord}}
\newcommand{\Op}{\operatorname{Op}}
\newcommand{\Tr}{\operatorname{Tr}}
\newcommand{\Nm}{\operatorname{Nm}}
\newcommand{\discr}{\operatorname{disrc}}
\newcommand{\supp}{\operatorname{supp}}

\numberwithin{equation}{section}
\numberwithin{thm}{section}
\numberwithin{table}{section}

\def\sssum{\mathop{\sum\!\sum\!\sum}}
\def\ssum{\mathop{\sum\ldots \sum}}
\def\iint{\mathop{\int\ldots \int}}

\def\vol {{\mathrm{vol\,}}}
\def\squareforqed{\hbox{\rlap{$\sqcap$}$\sqcup$}}
\def\qed{\ifmmode\squareforqed\else{\unskip\nobreak\hfil
\penalty50\hskip1em\null\nobreak\hfil\squareforqed
\parfillskip=0pt\finalhyphendemerits=0\endgraf}\fi}

\def \balpha{\bm{\alpha}}
\def \bbeta{\bm{\beta}}
\def \bgamma{\bm{\gamma}}
\def \blambda{\bm{\lambda}}
\def \bchi{\bm{\chi}}
\def \bphi{\bm{\varphi}}
\def \bpsi{\bm{\psi}}
\def \bomega{\bm{\omega}}
\def \btheta{\bm{\vartheta}}

\newcommand\veca{\boldsymbol{a}}
\newcommand\vecb{\boldsymbol{b}}
\newcommand\vech{\boldsymbol{h}}
\newcommand\vecq{\boldsymbol{q}}
\newcommand\vecu{\boldsymbol{u}}
\newcommand\vecv{\boldsymbol{v}}
\newcommand\vecw{\boldsymbol{w}}
\newcommand\vecx{\boldsymbol{x}}
\newcommand\vecy{\boldsymbol{y}}
\newcommand\vecz{\boldsymbol{z}}

\newcommand{\bfxi}{{\boldsymbol{\xi}}}
\newcommand{\bfrho}{{\boldsymbol{\rho}}}

\def\sfJ{\mathsf {J}}
\def\sfG {\mathsf {G}}
\def\sfK {\mathsf {K}}
\def\sfS {\mathsf {S}}
\def\sfT{\mathsf {T}}

 \def \xbar{\overline x}
  \def \ybar{\overline y}

\def\cA{{\mathcal A}}
\def\cB{{\mathcal B}}
\def\cC{{\mathcal C}}
\def\cD{{\mathcal D}}
\def\cE{{\mathcal E}}
\def\cF{{\mathcal F}}
\def\cG{{\mathcal G}}
\def\cH{{\mathcal H}}
\def\cI{{\mathcal I}}
\def\cJ{{\mathcal J}}
\def\cK{{\mathcal K}}
\def\cL{{\mathcal L}}
\def\cM{{\mathcal M}}
\def\cN{{\mathcal N}}
\def\cO{{\mathcal O}}
\def\cP{{\mathcal P}}
\def\cQ{{\mathcal Q}}
\def\cR{{\mathcal R}}
\def\cS{{\mathcal S}}
\def\cT{{\mathcal T}}
\def\cU{{\mathcal U}}
\def\cV{{\mathcal V}}
\def\cW{{\mathcal W}}
\def\cX{{\mathcal X}}
\def\cY{{\mathcal Y}}
\def\cZ{{\mathcal Z}}
\def\Ker{{\mathrm{Ker}}}

\def\NmQR{N(m;Q,R)}
\def\VmQR{\cV(m;Q,R)}

\def\Xm{\cX_m}

\def \A {{\mathbb A}}
\def \B {{\mathbb A}}
\def \C {{\mathbb C}}
\def \F {{\mathbb F}}
\def \G {{\mathbb G}}
\def \L {{\mathbb L}}
\def \K {{\mathbb K}}
\def \N {{\mathbb N}}
\def \PP {{\mathbb P}}
\def \Q {{\mathbb Q}}
\def \R {{\mathbb R}}
\def \Z {{\mathbb Z}}

\def \fU{\mathfrak U}
\def \fS{\mathfrak S}
\def \fM{\mathfrak M}
\def \fI{\mathfrak I}
\def \fJ{\mathfrak J}
\def \fH{\mathfrak H}
\def \fF{\mathfrak F}
\def \fE{\mathfrak E}
\def \fB{\mathfrak B}
\def \fA{\mathfrak A}

\def\GL{\operatorname{GL}}
\def\SL{\operatorname{SL}}
\def\PGL{\operatorname{PGL}}
\def\PSL{\operatorname{PSL}}
\def\li{\operatorname{li}}
\def\sym{\operatorname{sym}}

\def\Mob{M{\"o}bius }

\def\fF{\mathfrak{F}}
\def\M{\mathsf {M}}
\def\T{\mathsf {T}}

\def\e{{\mathbf{\,e}}}
\def\ep{{\mathbf{\,e}}_p}
\def\eq{{\mathbf{\,e}}_q}

\def\\{\cr}
\def\({\left(}
\def\){\right)}
\def\fl#1{\left\lfloor#1\right\rfloor}
\def\rf#1{\left\lceil#1\right\rceil}

\def\Tr{{\mathrm{Tr}}}
\def\Nm{{\mathrm{Nm}}}
\def\Im{{\mathrm{Im}}}

\def \oF {\overline \F}

\newcommand{\pfrac}[2]{{\left(\frac{#1}{#2}\right)}}

\def \Prob{{\mathrm {}}}
\def\e{\mathbf{e}}
\def\ep{{\mathbf{\,e}}_p}
\def\epp{{\mathbf{\,e}}_{p^2}}
\def\em{{\mathbf{\,e}}_m}

\def\Res{\mathrm{Res}}
\def\Orb{\mathrm{Orb}}

\def\vec#1{\mathbf{#1}}
\def \va{\vec{a}}
\def \vb{\vec{b}}
\def \vc{\vec{c}}
\def \vh{\vec{h}}
\def \vm{\vec{m}}
\def \vn{\vec{n}}
\def \vs{\vec{s}}
\def \vu{\vec{u}}
\def \vv{\vec{v}}
\def \vx{\vec{x}}
\def \vy{\vec{y}}
\def \vz{\vec{z}}

\def\flp#1{{\left\langle#1\right\rangle}_p}
\def\T {\mathsf {T}}

\def\mand{\qquad\mbox{and}\qquad}

 \date{\today}



\title[Mean value theorems for rational exponential sums]
{Mean value theorems for rational exponential sums}


\author[D. Koh]{Doowon Koh}

\address{Department of Mathematics,
Chungbuk National University,
Cheon\-gju, Chungbuk 28644, Korea}
\email{koh131@chungbuk.ac.kr}

\author[I. E. Shparlinski] {Igor E. Shparlinski}
\address{School of Mathematics and Statistics, University of New South Wales, Sydney NSW 2052, Australia}
\email{igor.shparlinski@unsw.edu.au}

\begin{abstract} We obtain finite field analogues of a series of recent results on various 
 mean value  theorems for Weyl sums. 
Instead of the Vinogradov Mean Value Theorem, our results rest on the classical argument 
of Mordell, combined with several other ideas.
 \end{abstract}

\keywords{Rational exponential sums, mean values, restricted mean values}
\subjclass[2020]{11L07,  11T23}

\maketitle

\tableofcontents 

\section{Introduction}\label{sec:intro}

\subsection{Set-up} 
For a prime $p$, as usual we use $\F_p$ to denote the finite 
field of $p$ elements. 
Given a $d$-dimensional  vector $\va =(a_1, \ldots, a_d) \in \F_p^d$ 
and an integer $N \le p$ we consider the exponential sums 
\[
S(\va; N) = \sum_{n =1}^N \ep\(a_1n+ \ldots + a_dn^d\), 
\]
where $\ep(z) = \exp(2\pi i z/p)$ and we always assume that the elements 
of $\F_p$ are represented by the set $\{0, \ldots, p-1\}$. 

For complete sums, that is, for $N = p$, the celebrated  result  of Weil~\cite{Weil} implies  that  for any non-zero 
vector  $\va \in \F_p^d\setminus \{\mathbf{0} \}$,  over  a finite field $\F_p=\Z_p$ of $p$ elements, where $p$ is prime,   we have an essentially optimal estimate 
\begin{equation}
\label{eq:Weil}
\left| S(\va; p) \right| \le (d - 1) p^{1/2},
\end{equation}
see, for example,~\cite[Chapter~11]{IwKow} or~\cite[Chapter~6]{Li}. 

Using the  standard completing technique (see~\cite[Section~12.2]{IwKow}), 
one can extend~\eqref{eq:Weil} to incomplete sums with $N < p$ at the cost of just a logarithmic 
loss and show that 
\begin{equation}
\label{eq:Weil-Incompl}
\left|S(\va; N) \right| \ll p^{1/2} \log p,
\end{equation}
where as usual, the notations  $A \ll B$ and $A = O(B)$, are equivalent
to $|A|  \le c B$ for some constant $c>0$,
which throughout this work  may depend only  on the degree $d \ge 2$.

Note that while the bound~\eqref{eq:Weil-Incompl} is satisfactory for large $N$,  it loses its strength when 
$N$ decreases and it becomes trivial when $N \le p^{1/2}$. Unfortunately, for smaller values of $N$, the
only general alternative to~\eqref{eq:Weil-Incompl} is a general bound on Weyl sums, 
which is a direct implication of the current form of the  {\it Vinogradov mean value theorem} ~\cite{BDG, Wool-Nest}
 and is  given in~\cite[Theorem~5]{Bourg1}. We note however that in a special case of monomial sums 
 there is a stronger result of Kerr~\cite{Kerr}. 
 
 On the other hand,  the classical argument of Mordell~\cite{Mor} can easily be generalised to give the 
 following rather precise formula  
 \begin{equation}
\label{eq:Mord}
M_{d,2s}(N) = s! N^s + O\(N^{s-1}\),  \qquad s =1, \ldots, d, 
\end{equation}
for the first $d$ even moments of the above sums, where
 \[
 M_{d,\rho} (N) = \frac{1}{p^d} \sum_{\va \in \F_p^d} |S(\va; N) |^\rho . 
\]

Thus motivated by the discrepancy between rather limited pointwise estimates on the sums $S(\va; N)$
and very precise value on their moments, one can consider scenarios which ``interpolate'' between these 
two extreme cases and consider bounding $S(\va; N)$ with a limited amount of averaging over $\va$.

For example, given a non-empty set $\fU \subseteq \F_p^d$
of cardinality $U = \# \fU$
we consider the {\it restricted\/} average 
\begin{equation}
\label{eq:MU}
 M_{d,k} (\fU; N) = \frac{1}{U} \sum_{\va \in\fU} |S(\va; N) |^k .
\end{equation}
Interesting examples of such sets $\fU$ include cubes 
\begin{equation}
\label{eq:Cube}
\fB = [A_1+1, A_1+H]\times \ldots \times [A_d +1, A_d + H]
\end{equation}
with integers $A_1, \ldots, A_d$ and $H \le p$ (or more general 
rectangular boxes) and moment curves 
\begin{equation}
\label{eq:MomCurve}
\fM = \{(t, \ldots, t^d):~ t \in \F_p\}.
\end{equation}

 We note that  $M_{d,k} (\fU; N)$   is a discrete analogue of similarly defined 
quantities  in  the continuous setting of Weyl sums, 
see~\cite{Bourg2, BrChSh, ChSh-QJM, ChSh-AMH,  DGW, DeLa, GuMa, MaOh, OhYe} 
and  references therein.  
Several more results  on the distribution of small and large values of Weyl sums can be found here~\cite{BaChSh2,CKMS, ChSh-JMAA}.

 \subsection{Main results} 
In order to study   $M_{d,k} (\fU; N)$   we  need to obtain new estimates on 
 $M_{d,k} (N)$,  which go beyond~\eqref{eq:Mord} and its natural implication (following from the H{\"o}lder
 inequality), asserting that 
  \begin{equation}
\label{eq:Mord s d}
M_{d,2s}(N) \ll N^s + N^{2s - d} , 
\end{equation}
which holds for any real $s \ge 1$. We emphasise that  $s$ in~\eqref{eq:Mord s d}  does not have to be 
integer. 

We now obtain another bound, which is better than~\eqref{eq:Mord s d}  for large values of $s$.
We prove it under the condition 
\begin{equation}
\label{eq:Nk~p}
sN^k < p \le sN^{k+1}
\end{equation}
for some integer $k \ge 1$ 
(note that we discard the case $N \ge p/s$ as  for such $N$
 the pointwise  bound~\eqref{eq:Weil-Incompl} is essentially optimal).

\begin{thm}  \label{thm:MVT-Mod_p large s} Let $k$ be a fixed integer with $d\ge k \ge 1$. 
For any  $N$ satisfying~\eqref{eq:Nk~p} and  for any positive integer $s \ge d(d+1)/2$
 we have
\[
 M_{d,2s}(N) \le N^{2s - k(k+1)/2+ o(1)} p^{-d+k}. 
 \]
\end{thm}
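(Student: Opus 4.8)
The plan is to bound the $2s$-th moment $M_{d,2s}(N) = p^{-d}\sum_{\va}|S(\va;N)|^{2s}$ by counting solutions of the associated Vinogradov-type system of congruences and then applying Mordell's argument in the regime where the number of variables exceeds the number of congruences. First I would expand $|S(\va;N)|^{2s}$ and interchange summation, using the orthogonality relation $p^{-1}\sum_{a\in\F_p}\ep(at)=\mathbf{1}_{t\equiv 0}$ in each coordinate, so that
\[
M_{d,2s}(N) = \#\left\{(x_1,\ldots,x_s,y_1,\ldots,y_s)\in[1,N]^{2s}:~\sum_{i=1}^s x_i^j \equiv \sum_{i=1}^s y_i^j \!\!\pmod p,\ 1\le j\le d\right\}.
\]
Since $s\ge d(d+1)/2$ is large and, by~\eqref{eq:Nk~p}, $N$ is quite small relative to $p$, the point is that only the first $k$ congruences $j=1,\ldots,k$ are genuinely restrictive: because $sN^k<p$, both sides $\sum x_i^j$ and $\sum y_i^j$ for $j\le k$ lie in an interval of length $<p$, so the congruences mod $p$ for $j\le k$ are in fact \emph{equalities over $\Z$}. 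The remaining congruences $j=k+1,\ldots,d$ will be handled by the extra factor $p^{-(d-k)}$ we are allowed: once the first $k$ equalities are imposed, the number of free choices should be controlled so that each of the last $d-k$ congruences cuts the count by a factor of roughly $p$ on average.

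Next I would carry out the counting in two stages. In the first stage, fix the values of the power sums $P_j=\sum_{i=1}^s x_i^j$ over $\Z$ for $j=1,\ldots,k$; these power sums determine the elementary symmetric functions $e_1,\ldots,e_k$ of $x_1,\ldots,x_s$ via Newton's identities, hence the number of $(x_1,\ldots,x_s)\in[1,N]^s$ with prescribed $(P_1,\ldots,P_k)$ is at most the number of $s$-tuples whose first $k$ elementary symmetric functions are fixed, which by a standard divisor-type argument (a tuple is determined up to permutation by all $e_1,\ldots,e_s$, and $e_{k+1},\ldots,e_s$ range over $O(N^{s-k})$ values of size $N^{O(1)}$) is $N^{s-k+o(1)}$. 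The number of admissible tuples $(P_1,\ldots,P_k)$ is $O(N^k)$ since $P_j\ll N^j$ but more precisely they are constrained: trivially there are at most $\prod_{j=1}^k N^j=N^{k(k+1)/2}$ of them, but in fact I would instead not enumerate $(P_1,\ldots,P_k)$ directly and rather use the cleaner route below. In the second stage, with $(x_1,\ldots,x_s)$ fixed so that $P_1,\ldots,P_k$ are determined, I count $(y_1,\ldots,y_s)\in[1,N]^s$ with $\sum y_i^j = P_j$ for $j\le k$ (equalities) and $\sum y_i^j\equiv P_j\pmod p$ for $k<j\le d$.

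The cleanest packaging: by Cauchy--Schwarz / the standard reduction, $M_{d,2s}(N) p^d$ equals the number of solutions of the full system, which is at most
\[
\left(\sum_{\va\in\F_p^d\setminus\{\mathbf 0\}}\cdots\right)\ \le\ p^{d-k}\cdot \#\left\{(\vx,\vy)\in[1,N]^{2s}:~\sum_i x_i^j=\sum_i y_i^j,\ 1\le j\le k\right\},
\]
where the factor $p^{d-k}$ comes from summing the trivial bound over the last $d-k$ free congruences after fixing the first $k$; and the remaining count is exactly $J_{k,s}(N)$, the number of solutions of the genuine Vinogradov system in degree $k$ with $s$ variables over $[1,N]$. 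Since $s\ge d(d+1)/2\ge k(k+1)/2$, we are in the range where $J_{k,s}(N)$ is dominated by the diagonal contribution, and Mordell's argument (the generalisation leading to~\eqref{eq:Mord}, run with $k$ in place of $d$ and extended past the critical exponent $k(k+1)/2$ by trivially bounding the extra $s-k(k+1)/2$ variables while using the symmetric-function count for the first $k(k+1)/2$) gives $J_{k,s}(N)\le N^{2s-k(k+1)/2+o(1)}$. Combining, $M_{d,2s}(N)\le N^{2s-k(k+1)/2+o(1)}p^{-d+k}$, as claimed.

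The main obstacle I anticipate is making the step ``sum over the last $d-k$ congruences costs only $p^{d-k}$'' fully rigorous: a priori, after fixing $\vx$ and $\vy$ satisfying the first $k$ equalities, the last $d-k$ residues $\sum_i x_i^j-\sum_i y_i^j\bmod p$ are determined, not free, so one cannot simply multiply by $p^{d-k}$. The correct argument runs the other way — one bounds the number of $\va$ with a \emph{nonzero} $(k{+}1,\ldots,d)$-part crudely and shows the solution count is dominated by the part of the system with those higher congruences automatically satisfied — and the technical heart is to verify that the contribution of $\va$ whose last $d-k$ coordinates are arbitrary is genuinely $\le p^{d-k}J_{k,s}(N)$ rather than something larger; this requires the hypothesis $sN^k<p$ (so the degree-$\le k$ relations are equalities, decoupling the two blocks of variables) together with $p\le sN^{k+1}$ (so that $N$ is not so small that a stronger bound would be needed). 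A secondary technical point is the divisor-bound estimate $N^{s-k+o(1)}$ for tuples with prescribed low-degree power sums, which needs the bound on the number of divisors / on representations by elementary symmetric polynomials and is where the $o(1)$ exponent enters.
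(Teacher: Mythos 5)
Your setup is right and you have even correctly located the difficulty, but you do not resolve it, and the resolution you sketch does not give the theorem. The identity $M_{d,2s}(N)=R_{d,s}(N)$ (the number of solutions of the full system of $d$ congruences; note there is no extra factor of $p^d$ here, so your later line ``$M_{d,2s}(N)p^d$ equals the number of solutions'' contradicts your own first display) and the observation that under~\eqref{eq:Nk~p} the congruences of degree $j\le k$ are genuine equations over $\Z$ are both correct and are exactly how the paper begins. The gap is in the treatment of the congruences of degree $j=k+1,\ldots,d$. Dropping them gives only $R_{d,s}(N)\le J_{k,s}(N)\le N^{2s-k(k+1)/2+o(1)}$ with no power of $p$ saved, and inserting a factor $p^{d-k}$ ``for the free congruences'' is, as you yourself concede, not legitimate: once $(\vx,\vy)$ is chosen the higher residues are determined, so nothing is being summed over, and the inequality you display is just the trivial bound multiplied by a spurious $p^{d-k}$. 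Even granting that display, the arithmetic does not close: it yields $M_{d,2s}(N)\le p^{-k}J_{k,s}(N)$, which is not the claimed $N^{2s-k(k+1)/2+o(1)}p^{-d+k}$ unless $k=d/2$.

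The missing idea is to keep all $d$ congruences and parametrize the higher ones as exact equations: for $j>k$ write $\sum_i(x_i^j-y_i^j)=m_jp$ with $|m_j|\le sN^j/p$, so that $R_{d,s}(N)\le\sum_{\vm}J_{d,s}(N;\vm)$ over shift vectors $\vm=(0,\ldots,0,pm_{k+1},\ldots,pm_d)$, as in~\eqref{eq:R vs Jmj}. The number of admissible $\vm$ is $\prod_{j=k+1}^d(2sN^j/p+1)\ll p^{-d+k}N^{d(d+1)/2-k(k+1)/2}$, where the upper bound $p\le sN^{k+1}$ in~\eqref{eq:Nk~p} guarantees $sN^j/p\gg1$ for every $j\ge k+1$; and each term is bounded by the \emph{homogeneous degree-$d$} count via~\eqref{eq:Triv Jh} and Lemma~\ref{lem:MVT}, namely $J_{d,s}(N;\vm)\le J_{d,s}(N)\le N^{2s-d(d+1)/2+o(1)}$ for $s\ge d(d+1)/2$. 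Multiplying gives exactly $N^{2s-k(k+1)/2+o(1)}p^{-d+k}$. Thus the factor $p^{-d+k}$ comes from counting shift vectors, not from ``free congruences'', and the relevant mean value theorem is the degree-$d$ one, not the degree-$k$ one. A separate inaccuracy: Mordell's symmetric-function argument gives only $J_{k,s}(N)\ll N^{2s-k}$; the bound $N^{2s-k(k+1)/2+o(1)}$ for $k\ge2$ requires the Bourgain--Demeter--Guth/Wooley theorem recorded in Lemma~\ref{lem:MVT}. This last point becomes moot once the argument is run with the degree-$d$ system as above.
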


 Note that the saving of Theorem~\ref{thm:MVT-Mod_p large s}  improves that of~\eqref{eq:Mord s d} for 
 all $N \le p^{1-\kappa}$ with some fixed $\kappa > 0$. Indeed, we need to show  that   under this 
 condition we have 
 \[
N^{k(k+1)/2} p^{d-k} \ge N^{d+\eta}
 \]
 for some fixed $\eta$
 or 
\begin{equation}
\label{eq:Improve}
N^{d - k(k+1)/2 + \eta}  \le p^{d-k}.
\end{equation}
If $k =1$  then~\eqref{eq:Improve} is guaranteed if
$N \le p^{1-\kappa}$ for some $\kappa > 0$ (with a suitable $\eta > 0$ depending on $\kappa$).
Recalling~\eqref{eq:Nk~p}, we see that if  $k \ge 2$  then~\eqref{eq:Improve} is guaranteed if
$d/k-(k+1)/2   <  d-k $ or $k/2 < d$, which is much weaker than the condition $k \le d$ which we impose in 
Theorem~\ref{thm:MVT-Mod_p large s}. 

\begin{rem}\label{rem:large k$}
It is easy to see that from the proof of Theorem~\ref{thm:MVT-Mod_p large s}  that for $k \ge d$ we have 
$M_{d,2s}(N)  =  J_{d,s}(N)$ and hence we have an essentially optimal bound from Lemma~\ref{lem:MVT}. 
\end{rem}

Next we improve~\eqref{eq:Mord s d} for smaller values of $s$ 

\begin{thm}  \label{thm:MVT-Mod_p med s} Let $k$ be a fixed integer with $d-2\ge k \ge 1$. 
For any  $N$ satisfying~\eqref{eq:Nk~p} and  for any positive integer $s \le (k+1)(k+2)/2-1$
 we have
\[
 M_{d,2s}(N) \le  N^{d+s+o(1)}p^{-1} + N^{s+ d(d+1)/2 -  k(k+1)/2- 1+o(1)}~p^{-d+k}. 
 \]
\end{thm}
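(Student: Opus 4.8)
The plan is to separate the moment $M_{d,2s}(N)$ into a ``diagonal'' contribution, where it essentially reduces to the Vinogradov-type count $J_{d,s}(N)$ governed by Lemma~\ref{lem:MVT}, and an ``off-diagonal'' contribution which we bound by the completing technique. Writing out the $2s$-th moment combinatorially,
\begin{equation*}
 M_{d,2s}(N) = \frac{1}{p^d}\sum_{\va \in \F_p^d}\; \sum_{\substack{n_1,\ldots,n_s,m_1,\ldots,m_s\\ 1\le n_i,m_j\le N}} \ep\!\left(\sum_{j=1}^d a_j\bigl(n_1^j+\ldots+n_s^j - m_1^j - \ldots - m_s^j\bigr)\right),
\end{equation*}
and summing first over $\va$, each coordinate $a_j$ contributes a full character sum that vanishes unless the corresponding symmetric power sum of the $n_i$ equals that of the $m_j$. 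Thus $M_{d,2s}(N)$ counts $2s$-tuples with matching power sums of degrees $1,\ldots,d$; but since the range is $1\le n_i,m_j\le N$ with $N$ possibly much smaller than $p$, a genuine equality over $\Z$ is enforced only for degrees up to the point where the power sums are $<p$, i.e.\ roughly degree $k$ by~\eqref{eq:Nk~p}; for the higher degrees $k+1,\ldots,d$ we only get a congruence modulo $p$, which is exactly the source of the two terms in the claimed bound.

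The key steps, in order, are as follows. First, split the count according to whether the power sums of degrees $1,\ldots,k$ of the $n$'s agree with those of the $m$'s as \emph{integers}. On the agreeing part, after fixing the common values of these $k$ power sums, the number of admissible $2s$-tuples is controlled by an incomplete version of the Vinogradov mean value count in $k$ variables-worth of equations; I would invoke Lemma~\ref{lem:MVT} (the $k$-dimensional mean value estimate), which for $s \le (k+1)(k+2)/2 - 1$ is in the ``subcritical'' regime and gives $N^{s+o(1)}$ per choice of the remaining degrees of freedom, and then sum over the residual congruences of degrees $k+1,\ldots,d$ modulo $p$; counting these as $p$-periodic constraints on roughly $d-k$ of the coordinates produces the factor $N^{d(d+1)/2 - k(k+1)/2}p^{-d+k}$ after accounting for how many coordinates remain free. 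Second, on the part where the low-degree power sums do \emph{not} all agree as integers but only modulo $p$ — this forces a ``large'' power sum, hence a near-extremal configuration — I would bound the character sum over $\va$ trivially in the coordinates $k+1,\ldots,d$ (giving $p^{-1}$ savings from one surviving non-trivial character, in the spirit of~\eqref{eq:Weil-Incompl}'s completing argument) and crudely in the rest, yielding the first term $N^{d+s}p^{-1}$. Summing the two contributions gives the stated estimate.

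The main obstacle I expect is the careful bookkeeping in the first (diagonal) step: one must correctly track which of the $2s$ variables are ``pinned'' by the integral equalities of degrees $1,\ldots,k$ versus merely congruentially constrained modulo $p$ in degrees $k+1,\ldots,d$, and verify that the subcritical bound from Lemma~\ref{lem:MVT} applies — this is precisely why the hypothesis $s \le (k+1)(k+2)/2 - 1$ and $k \le d-2$ appear. A secondary subtlety is ensuring the two regimes (``equal over $\Z$'' vs.\ ``equal only mod $p$'') are combined without double counting and that the transition point is exactly at degree $k$ dictated by~\eqref{eq:Nk~p}; getting the exponent $d(d+1)/2 - k(k+1)/2 - 1$ right — in particular the $-1$ — will require being slightly more precise than a lossy union bound in the congruential count, presumably by isolating one nontrivial character sum to gain the extra $p^{-1}$ factor there as well.
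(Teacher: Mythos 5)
Your opening reduction is correct and matches the paper's: under~\eqref{eq:Nk~p} the congruences of degrees $1,\ldots,k$ are genuine integer equations, while those of degrees $k+1,\ldots,d$ become integer equations with shifts $pm_j$, $|m_j|\le sN^j/p$, so that $M_{d,2s}(N)=R_{d,s}(N)\le\sum_{\vm}J_{d,s}(N;\vm)$ over shift vectors $\vm=(0,\ldots,0,pm_{k+1},\ldots,pm_d)$. From there, however, the argument has two genuine gaps. First, your case ``the low-degree power sums agree only modulo $p$ but not over $\Z$'' is empty: \eqref{eq:Nk~p} forces $sN^j<p$ for $j\le k$, so agreement mod $p$ in those degrees \emph{is} agreement over $\Z$, and the term $N^{d+s}p^{-1}$ cannot arise from that (non-existent) case. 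In the paper it is the contribution of the shift vectors whose only nonzero coordinate is $m_d$: there are $\ll N^{d}/p$ of them and each contributes at most $J_{d,s}(N)\le N^{s+o(1)}$ by~\eqref{eq:Triv Jh} and Lemma~\ref{lem:MVT}.

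Second, and more seriously, handling every nonzero $\vm$ by Lemma~\ref{lem:MVT} plus a union bound over the $\ll N^{d(d+1)/2-k(k+1)/2}p^{-d+k}$ admissible shifts gives the exponent $s+d(d+1)/2-k(k+1)/2$, i.e.\ exactly the bound of Theorem~\ref{thm:MVT-Mod_p large s} without the $-1$. The saving needed is a factor $N^{-1}$ per nonzero shift, not the ``extra $p^{-1}$ from one surviving character'' you suggest at the end, which is both the wrong quantity and not a workable mechanism. The missing ingredient is Lemma~\ref{lem:MVT-Inhom-Z} (Wooley's subconvexity for inhomogeneous Vinogradov systems): if the smallest index $\ell$ with $m_\ell\ne 0$ satisfies $\ell<d$ and $s\le\ell(\ell+1)/2$, then $J_{d,s}(N;\vm)\le N^{s-1+o(1)}$. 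The paper sorts the nonzero shifts by this smallest index $\ell\in\{k+1,\ldots,d\}$, applies Wooley's bound for $k+1\le\ell\le d-1$ (this is exactly where the hypotheses $s\le(k+1)(k+2)/2-1$ and $k\le d-2$ enter) and the trivial bound for $\ell=d$, and then checks that the resulting terms decrease in $\ell$ (using $N^{\ell}\gg p$ for $\ell\ge k+1$), so that $\ell=k+1$ dominates and yields the second term. Without Lemma~\ref{lem:MVT-Inhom-Z}, or an equivalent subconvexity input for inhomogeneous systems, your plan cannot produce the $-1$ in the exponent.
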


\begin{rem} 
Our proof of Theorem~\ref{thm:MVT-Mod_p med s} relies on a result of Wooley~\cite[Theorem~1.2]{Wool-Inhom}.
Using~\cite[Theorems~1.1 and~1.3]{Wool-Inhom} in our argument one can get other versions of Theorem~\ref{thm:MVT-Mod_p med s}. 
\end{rem}

We now establish the following bound $M_{d,k} (\fB; N)$, where $\fB$ is a cube as in~\eqref{eq:Cube}.

\begin{thm}  \label{thm:small_box}
Let   $H, N\le p$ be positive integers. Then, for any cube $\fB$  of the form~\eqref{eq:Cube},  and for any positive integer $s < d,$
 we have
\begin{align*}
 M_{d,2s} (\fB; N) & \ll  \min\left\{   pH^{-1} N^{s},   p^{1/2} H^{-1/2}  \(N^{s} + N^{2s-d/2}\)\right\} \\
 & \qquad \quad + \min\left\{(p/H)^d N^{ s-1/2} ,    (p/H)^{d/2}   \(N^s + N^{2s-d/2}\)\right\}.
\end{align*}
 \end{thm}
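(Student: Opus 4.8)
The plan is to express $M_{d,2s}(\fB;N)$ via the standard orthogonality expansion and then decouple the sum over $\va \in \fB$ from the combinatorial structure coming from the $2s$-fold product of exponential sums. Writing out $|S(\va;N)|^{2s}$ as a sum over $\vn = (n_1,\dots,n_{2s}) \in [1,N]^{2s}$ of $\ep\big(\sum_{j=1}^d a_j w_j(\vn)\big)$, where $w_j(\vn) = \sum_{i\le s} n_i^j - \sum_{i>s} n_i^j$ is the difference of power sums of degree $j$, we get
\[
U \cdot M_{d,2s}(\fB;N) = \sum_{\vn \in [1,N]^{2s}} \prod_{j=1}^d \left(\sum_{a_j = A_j+1}^{A_j+H} \ep\big(a_j w_j(\vn)\big)\right).
\]
Each inner sum is a geometric progression, so it is $O\big(\min\{H, \|w_j(\vn)/p\|^{-1}\}\big)$ in absolute value. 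This is the key mechanism: if some $w_j(\vn) \not\equiv 0 \pmod p$ then we gain over the trivial bound $H$ for that coordinate. There are then two regimes to balance, which explains the two $\min\{\cdot,\cdot\}$ terms in the statement.


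First I would isolate the ``diagonal'' contribution where $w_j(\vn) \equiv 0 \pmod p$ for all $j = 1,\dots,d$; this is exactly $J_{d,s}(N)$ in the notation of the excerpt (the number of solutions to the Vinogradov-type system mod $p$), and since $N \le p$ it is controlled by $N^s + N^{2s-d}$ — but more usefully here we only use $s < d$ and bound it via $N^s + N^{2s-d/2}$ after a Cauchy–Schwarz / completing step, giving the $(p/H)^{d/2}(N^s + N^{2s-d/2})$-type and $p^{1/2}H^{-1/2}(N^s+N^{2s-d/2})$-type pieces. For the off-diagonal part I would bound the product of inner geometric sums by first taking absolute values, then handling the $w_j$-constraints by a combination of two devices: (i) for the bound with leading factor $p/H$, keep one coordinate, say $j=1$, free, bound its inner sum trivially by $H$ only on the rare set where $w_1 \equiv 0$ and by the geometric bound otherwise, and sum $\sum_{w_1}\min\{H,\|w_1/p\|^{-1}\}^d$-style expressions after fixing the other $w_j$'s; (ii) for the bound with leading factor $p^{1/2}H^{-1/2}$, apply Cauchy–Schwarz in $\vn$ to replace $\prod_j(\cdots)$ by something symmetric and then complete the sum over the $a_j$'s to a full sum mod $p$, at which point Weil's bound \eqref{eq:Weil} (or rather the $L^2$ orthogonality it competes with) kicks in and produces the $p^{1/2}H^{-1/2}$ saving; the residual main term in each case is $N^s$ from the genuine diagonal $\vn$ with all $n_i$ matching up in pairs, plus the $N^{2s-d/2}$ overflow.


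Concretely, I expect the cleanest route is: (1) reduce to $\sum_{\vn}\prod_{j=1}^d \min\{H, \|w_j(\vn)/p\|^{-1}\}$; (2) dyadically decompose each factor according to the size of $\|w_j(\vn)/p\|^{-1}$, say in $[P_j, 2P_j]$ with $1 \le P_j \le H$; (3) for each dyadic tuple $(P_1,\dots,P_d)$, bound the number of $\vn$ with $\|w_j(\vn)/p\| \le 1/P_j$ for all $j$ by first fixing the residues $w_j \bmod p$ (there are $O(\prod_j (p/P_j + 1))$ choices, but constrained) and then invoking the mod-$p$ Vinogradov count — here is where I would either keep all $d$ constraints (costing $p/P_j$ each, yielding the $(p/H)^d$ and $(p/H)^{d/2}$ families after optimising) or drop down to fewer constraints when $N$ is small; (4) sum over the dyadic tuples, which costs only $(\log p)^{O(1)}$ and is absorbed by the implied constant since $d$ is fixed. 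The main obstacle is step (3): getting the \emph{sharp} exponent requires matching the number of $\vn$ satisfying $k'$ of the power-sum congruences against $N^{s} + N^{2s - k'/2}$-type bounds uniformly in which subset of congruences one keeps, and then verifying that the two-term $\min$ structure in the statement is exactly what falls out of optimising the free-coordinate trick against the Cauchy–Schwarz-and-complete trick. I would handle this by treating the ``large box'' case $H$ comparable to $p$ (where $(p/H)^d \approx 1$ and the second bracket is just $J_{d,s}(N)$-type) and the ``small box'' case $H$ small separately, and checking that in each the claimed minimum is attained by the appropriate one of the two competing estimates.
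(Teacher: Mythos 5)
Your opening reduction is essentially the paper's: expanding $|S(\va;N)|^{2s}$ and summing the resulting geometric progressions over $\va\in\fB$ is equivalent (via Poisson summation, Lemma~\ref{lem:MVT2Ineq}) to bounding $M_{d,2s}(\fB;N)\ll\sum_{\vh\in\cU}R_{d,s}(N;\vh)$ with $\cU=[-p/H,p/H]^d\cap\Z^d$, and your Cauchy--Schwarz-and-complete device correctly produces the second entries $p^{1/2}H^{-1/2}(N^s+N^{2s-d/2})$ and $(p/H)^{d/2}(N^s+N^{2s-d/2})$ of the two minima, exactly as in \eqref{eq: Inhom VMVT-Cauchy}. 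But there is a genuine gap at the point you yourself flag as ``the main obstacle'': your plan for the off-diagonal count is to fix the residue vector $\vh=(w_1,\dots,w_d)\bmod p$ and ``invoke the mod-$p$ Vinogradov count.'' The only bound that gives is $R_{d,s}(N;\vh)\le R_{d,s}(N)\ll N^s$ (this is \eqref{eq:Triv Rh} combined with \eqref{eq:Rs0}), and summing it over the $\asymp(p/H)^d$ nonzero shifts yields $(p/H)^dN^{s}$, which misses the claimed first entry $(p/H)^dN^{s-1/2}$ by a factor of $N^{1/2}$. No amount of dyadic decomposition or reshuffling of which congruences you keep recovers this: the saving comes from exploiting $\vh\ne\vec 0$ \emph{itself}, i.e.\ from the subconvexity phenomenon for inhomogeneous Vinogradov systems. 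The paper supplies this as Lemma~\ref{lem:MVT-Inhom-Mod_p} ($R_{d,s}(N;\vh)\ll N^{s-1/2}$ whenever the first nonzero coordinate of $\vh$ has index $\ell<d$), proved by adapting the Brandes--Hughes/Wooley differencing argument (the auxiliary polynomials $\psi_\nu(\vh;X)$ and the mixed moments of $S$ and $U$ in Lemmas~\ref{lem: R SU} and~\ref{lem:MVT_SU}) to the mod-$p$ setting. This is the new content of the theorem and it is absent from your proposal.

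A second, smaller omission: the subconvexity bound fails for shifts of the form $(0,\dots,0,h_d)$ (there $\ell=d$), so these must be split off into a separate class $\cG$; the point is that $\#\cG\ll p/H$ rather than $(p/H)^d$, and applying the trivial $N^s$ bound only to this thin class is what produces the first entry $pH^{-1}N^s$ of the first minimum. Your device (i) of ``keeping one coordinate free'' gestures at something like this but does not isolate the correct exceptional set; without the $\cG$ versus $\cH$ partition the trivial bound costs $(p/H)^dN^s$ and the first minimum's first term is not attained. In short: the skeleton (completion, diagonal/off-diagonal split, Cauchy--Schwarz for one branch of each minimum) matches the paper, but the two termwise bounds $pH^{-1}N^s$ and $(p/H)^dN^{s-1/2}$ require, respectively, the exceptional-class partition and the inhomogeneous subconvexity estimate, neither of which your argument contains.
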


\begin{rem}\label{rem: Gen Sums}
We actually establish  Theorem~\ref{thm:small_box} for moment of more general 
weighted rational sums, see the definition~\eqref{eq:Weight Mon}.
\end{rem}

We note that Theorem~\ref{thm:small_box}  is always better that the trivial bound 
\[
 M_{d,2s} (\fB; N)  \le (p/H)^d  M_{d,2s} (N) \le  (p/H)^d  N^s.
 \]
 We now compare Theorem~\ref{thm:small_box}   with another trivial bound 
\begin{equation}
\label{eq:Triv N2s}
 M_{d,2s} (\fB; N) \le  N^{2s}.
\end{equation}

We observe that for $s\ge d/2$ the bound of Theorem~\ref{thm:small_box}
simplifies as    
\begin{align*}
 M_{d,2s} (\fB; N) & \ll  \min\left\{   pH^{-1} N^{s},   p^{1/2} H^{-1/2} N^{2s-d/2} \right\} \\
 & \qquad \quad + \min\left\{(p/H)^d N^{ s-1/2} ,    (p/H)^{d/2} N^{2s-d/2} \right\}. 
\end{align*} 
Clearly the bound improves the trivial bound~\eqref{eq:Triv N2s} provided that
\[
pH^{-1} \le  \max\left\{N^{ s}, N^{d}\right\}  =N^d
\] \
and  
\[(p/H)^d \le \max\left\{N^{ s+1/2}, N^{d}  \right\}  =N^d, 
\]
which, using that $d/2 \le s \le d-1$,  can be combined as 
\[
HN \ge p.
\]

Similarly, for $s < d/2$,  we see that Theorem~\ref{thm:small_box} yields 
the bound
\begin{align*}
 M_{d,2s} (\fB; N) & \ll  \min\left\{   pH^{-1} N^{s},   p^{1/2} H^{-1/2}  N^{s} \right\} \\
 & \qquad \quad + \min\left\{(p/H)^d N^{ s-1/2} ,    (p/H)^{d/2}  N^{s} \right\}\\
 & = p^{1/2} H^{-1/2}  N^{s}+ \min\left\{(p/H)^d N^{ s-1/2} ,    (p/H)^{d/2}  N^{s} \right\}, 
\end{align*}   
which, as it  is easy to check,  remains nontrivial under the same condition
\[
HN^{2s/d} \ge p.
\]

\section{Bounds on the number of solutions to some systems of  equations and congruences}\label{sec:syst cong eq}

\subsection{Preliminaries bounds}
For $\vh = (h_1, \ldots, h_d) \in \Z^d$, we denote $J_{d,s}(N; \vh)$  as the number of solutions to the system 
of  equations 
\begin{align*}
    n_{1}+\ldots+n_{s}&= n_{s+1}+\ldots+n_{2s}  + h_1, \\
    n^{2}_{1}+\ldots+n^{2}_{s}&= n^{2}_{s+1}+\ldots+n^{2}_{2s}  +h_2, \\
    &\vdots \\
    n^{d}_{1}+\ldots+n^{d}_{s}&= n^{d}_{s+1}+\ldots+n^{d}_{2s} + h_d, \end{align*}
 in  integer variables $1\le n_i  \le N$,  $i = 1,\ldots, 2s$. 

 For the zero vector $\vh =  \vec{0}$ we denote $J_{d,s}(N) = J_{d,s}(N;\vec{0})$. 
 
 The following bound is known as the optimal form of the Vinogradov Mean Value 
 theorem and is due to  Bourgain, Demeter and Guth~\cite{BDG} and via an alternative 
 approach to Wooley~\cite{Wool-Nest}. 
 
 \begin{lem} \label{lem:MVT}
For any integer $s$ we have 
\[
J_{d,s}(N)  \le \(N^s + N^{2s - d(d+1)/2}\) N^{o(1)}.
\]
 \end{lem}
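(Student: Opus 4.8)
The plan is to reduce the lemma, which asserts the optimal bound for the number $J_{d,s}(N)$ of solutions of the Vinogradov system in the box $[1,N]^{2s}$, to the homogeneous-dilation form of the Main Conjecture in the Vinogradov Mean Value Theorem, as established by Bourgain, Demeter and Guth~\cite{BDG} and independently by Wooley~\cite{Wool-Nest}. Recall that those works prove, for the number $J_{s,d}(X)$ of integral solutions of
\[
\sum_{i=1}^{s} n_i^j = \sum_{i=1}^{s} n_{s+i}^j \qquad (1 \le j \le d)
\]
with $1 \le n_i \le X$, the bound $J_{s,d}(X) \ll X^{\varepsilon}\bigl(X^{s} + X^{2s - d(d+1)/2}\bigr)$ for every $\varepsilon>0$; absorbing the $X^\varepsilon$ into an $X^{o(1)}$ factor gives exactly the shape claimed here. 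So the only thing to check is that our $J_{d,s}(N)$ with variables running over $1 \le n_i \le N$ is literally the same counting function, up to the harmless discrepancy between counting in $[1,N]$ and in $[1,X]$ for the integer $X = N$; since $N$ is itself a positive integer here (as $N \le p$ is an integer in the set-up), there is in fact nothing to translate.

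First I would state the system of equations and note that $J_{d,s}(N)$ is precisely the number of solutions, in the unit box $1 \le n_i \le N$, of the $d$ equations obtained by setting $\vh = \vec 0$ in the displayed system preceding the lemma; this is the classical Vinogradov system of degree $d$ with $s$ terms on each side. Second, I would invoke the resolution of the Main Conjecture: for any $s \ge 1$ and any $\varepsilon > 0$,
\[
J_{d,s}(N) \ll_{d,s,\varepsilon} N^{\varepsilon}\bigl(N^{s} + N^{2s - d(d+1)/2}\bigr),
\]
citing~\cite{BDG} (for $d \ge 4$, with the classical cases $d=1,2$ elementary and $d=3$ due to Wooley's earlier work) and~\cite{Wool-Nest} (the nested efficient congruencing approach, covering all $d$). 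Third, I would remark that the two exponents $s$ and $2s-d(d+1)/2$ correspond respectively to the ``diagonal'' contribution (where $\{n_1,\dots,n_s\}$ is a permutation of $\{n_{s+1},\dots,n_{2s}\}$) and to the expected ``generic'' count coming from the volume heuristic, and that the bound is sharp up to the $N^{o(1)}$ factor. Finally, replacing the quantifier-over-$\varepsilon$ statement by a single $N^{o(1)}$ factor (legitimate since $d$ is fixed) yields the form stated in the lemma.

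The main point to be careful about is essentially bookkeeping rather than a genuine obstacle: one must make sure the normalisation of the counting function matches — that we are counting with all $2s$ variables in the same interval $[1,N]$ and with the zero right-hand side — so that the cited theorems apply verbatim with no loss. There is no hard analytic step to carry out here, since the deep input (decoupling, or efficient congruencing) is entirely imported; the lemma is simply the packaging of that input in the notation used throughout the paper, so that later results such as Theorem~\ref{thm:MVT-Mod_p large s} can quote it directly.
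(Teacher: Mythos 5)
Your proposal is correct and matches the paper exactly: the paper offers no proof of this lemma, stating only that it is the resolved Main Conjecture for the Vinogradov mean value theorem due to Bourgain--Demeter--Guth~\cite{BDG} and, via nested efficient congruencing, Wooley~\cite{Wool-Nest}. Your additional bookkeeping remarks about matching the counting function and converting the $\varepsilon$-quantifier into an $N^{o(1)}$ factor are accurate but not needed beyond the citation.
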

 
 We note that the bound of Lemma~\ref{lem:MVT} also applies to  $J_{d, s}(N; \vh)$
 with any  $\vh   \in \Z^d$,  that is 
\begin{equation}
\label{eq:Triv Jh}
 J_{d,s}(N; \vh) \le  J_{d,s}(N) 
\end{equation}
 for any $\vh   \in \Z^d$,  see, for example,~\cite[Equation~(13)]{CGOS} (the proof 
 is fully analogous to the derivation of~\eqref{eq:Triv Rh} below). 
 However, as it is shown first by Brandes and K. Hughes~\cite{BrHu} 
 and then  in a stronger form by Wooley~\cite{Wool-Inhom} for $s < d(d+1)$, one can 
 take advantage of the property $\vh \ne  \vec{0}$.
 
 We do not present all results of this form but rather recall one of them given 
 by~\cite[Theorem~1.2]{Wool-Inhom}.

 \begin{lem} \label{lem:MVT-Inhom-Z}
 Let $d \ge 3$ and let  $\ell<d$ be  the smallest index having
the property that $h_\ell \ne 0$, where $\vh =(h_1, \ldots, h_d) \in \Z^d$. 
Then for any positive integer  $s \le \ell(\ell + 1)/2$, one has
\[
 J_{d,s}(N; \vh) \le  N^{s-1+o(1)}. 
 \]
 \end{lem}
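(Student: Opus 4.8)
The plan is to invoke \cite[Theorem~1.2]{Wool-Inhom}, which is verbatim the assertion of Lemma~\ref{lem:MVT-Inhom-Z}, so strictly speaking nothing new needs to be proved. For orientation, however, it is worth recalling \emph{why} the range $s\le\ell(\ell+1)/2$ is the natural one and how one would argue. The key heuristic is that, for $s\le d(d+1)/2$, the homogeneous count $J_{d,s}(N)$ is of order $N^{s+o(1)}$ and already the \emph{diagonal} solutions, those with $\{n_1,\dots,n_s\}=\{n_{s+1},\dots,n_{2s}\}$ as multisets, contribute $\asymp N^{s}$. A diagonal solution forces $\vh=\mathbf 0$; once $\vh\ne\mathbf 0$ these solutions are excluded, and one expects to gain a full factor $N$, i.e.\ to reach $N^{s-1+o(1)}$.

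To implement this I would keep only the power-sum equations of degrees $1,\dots,\ell+1$ (possible since $\ell<d$) and pass to elementary symmetric functions through Newton's identities. Writing $e_j$ for the $j$-th elementary symmetric function of a block, the vanishing $h_1=\dots=h_{\ell-1}=0$ propagates inductively to give $e_j(n_1,\dots,n_s)=e_j(n_{s+1},\dots,n_{2s})$ for $1\le j\le\ell-1$, whereas the degree-$\ell$ equation yields $e_\ell(n_1,\dots,n_s)-e_\ell(n_{s+1},\dots,n_{2s})=(-1)^{\ell-1}h_\ell/\ell\ne 0$. In the sub-range $s=\ell$ the degree-$(\ell+1)$ equation then pins the two block sums $n_1+\dots+n_s$ and $n_{s+1}+\dots+n_{2s}$ to single admissible values, and the argument closes: the $s=\ell$ variables of the second block are determined by $e_1,\dots,e_\ell$, so with the sum fixed they range over $O(N^{\ell-1})$ multisets, after which the first block is determined up to permutation, giving $J_{d,\ell}(N;\vh)\ll N^{\ell-1+o(1)}$.

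For the remaining range $\ell<s\le\ell(\ell+1)/2$ the low-order symmetric functions no longer determine the multisets, and the algebraic reduction has to be combined with the Vinogradov mean value bound for the degree-$\ell$ system (Lemma~\ref{lem:MVT}): in this range that bound is $N^{s+o(1)}$, with the diagonal alone accounting for $\asymp N^{s}$, so any improvement must come precisely from the exclusion of the diagonal, which morally costs one power of $N$. Upgrading ``morally costs one power of $N$'' to a uniform quantitative statement --- rather than a mere $N^{o(1)}$ saving --- is the genuine technical obstacle, and in \cite{Wool-Inhom} it is overcome by an efficient-congruencing descent that carries the inhomogeneity $\vh$ along. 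I would not reproduce that descent, and would simply cite the result, as we do.
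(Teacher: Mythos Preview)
Your proposal is correct and matches the paper exactly: the paper gives no independent proof of this lemma but simply records it as a restatement of \cite[Theorem~1.2]{Wool-Inhom}, which is precisely what you do. Your supplementary heuristic discussion and the sketch for the sub-range $s=\ell$ are sound and go beyond what the paper provides, but the core ``proof'' in both cases is the same citation.
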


Similarly to the above, for $\vh = (h_1, \ldots, h_d) \in \F_p^d$, we define $R_{d,s}(N; \vh)$ as the number of solutions to the system of congruences.   
\[
    n^{\nu}_{1}+\ldots+n^{\nu}_{s}\equiv n^{\nu}_{s+1}+\ldots+n^{\nu}_{2s} +h_{\nu} \pmod p,
    \qquad \   \nu =1, \ldots,  d,
\]
 in  integer variables $1\le n_i  \le N$,  $i = 1,\ldots, 2s$.

It is easy to see that  the orthogonality of exponential functions allows us to express  $R_{d,s}(N; \vh)$
via exponential sums as 
\[
R_{d,s}(N; \vh) =  \frac{1}{p^d} \sum_{\va \in \F_p^d} |S(\va; N) |^{2s} \ep\(-\langle \va, \vh\rangle\),
\]
where $\langle \va, \vh\rangle$ denotes the inner product. 

In particular, this immediately implies that 
\begin{equation}
\label{eq:Triv Rh}
R_{d,s}(N; \vh)  \le  M_{d,2s}(N)  =  R_{d,s}(N) ,
\end{equation}
where $R_{d,s}(N) = R_{d,s}(N, \vec{0})$ and, as before, $\vec{0}$ denotes the zero vector. 
We note that using a similar integral representation for $J_{d,s}(N; \vh)$, we obtain a proof
of~\eqref{eq:Triv Jh}.

\subsection{New subconvexity bounds}
We now use some ideas of Brandes and Hughes~\cite{BrHu}, in a more refined form given by Wooley~\cite{Wool-Inhom}  
to obtain a more precise bound than~\eqref{eq:Triv Rh}.

First we formulate a  discrete analogue of~\cite[Lemma~2.1]{BrHu}  
combined with a discrete analogue of~\cite[Equation~(2.5)]{BrHu}, see 
also~\cite[Lemma~2.1]{Wool-Inhom}, for a more general statement, which we do 
not need. We omit the proof as it follows exactly the 
same steps and calculations as in~\cite{BrHu}, after one replaces integrals over the unit cube by sums 
of $\F_p^d$. We need to introduce some notation first. For $\vh = (h_1, \ldots, h_d) \in \F_p^d$,
we consider polynomials 
$$
\psi_\nu(\vh; X) = \sum_{i=0}^{\nu-1} \binom{\nu}{i} h_{\nu-i} X^i, \qquad \nu= 1, \ldots, d. 
$$

Then, given a $d$-dimensional  vector $\va =(a_1, \ldots, a_d) \in \F_p^d$
and an integer $N \le p$ we consider the exponential sums 
\[
U(\va, \vh;N) = \sum_{u =1}^N \ep\( a_1 \psi_1(\vh; u) + \ldots + a_d \psi_d(\vh; u)\). 
\]

 \begin{lem} \label{lem: R SU}
For any integer $s\ge 1$ we have 
\[
R_{d,s}(N; \vh) \ll \frac{1}{N p^{d}}   \sum_{\va \in \F_p^d} |S(\va; 2N) |^{2s} U(-\va; \vh; N) . 
\]
 \end{lem}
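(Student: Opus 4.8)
The plan is to mimic the Brandes–Hughes argument that underlies \cite[Lemma~2.1]{BrHu} but to replace the integral over the unit torus with a sum over $\F_p^d$. Recall that $R_{d,s}(N;\vh)$ counts solutions to $\sum_{j\le s} n_j^\nu - \sum_{j>s} n_j^\nu \equiv h_\nu \pmod p$. The first step is to introduce a new variable: I would substitute $n_j = m_j + t$ for $j=1,\dots,s$ and keep $n_{s+j}=m_{s+j}$ for $j=1,\dots,s$, where $t$ ranges over a block of length $N$, say $1 \le t \le N$, with the $m_j$ confined to an interval of length $O(N)$. Averaging over the free parameter $t$ is harmless (it multiplies the count by $N$), so $N\,R_{d,s}(N;\vh)$ equals the number of solutions in the enlarged variables. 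Expanding $(m_j+t)^\nu$ by the binomial theorem and collecting powers of $t$, the $\nu$-th congruence becomes $\sum_{j\le s} m_j^\nu - \sum_{j>s} m_j^\nu \equiv \psi_\nu(\vh;t) \pmod p$, where $\psi_\nu(\vh;X)=\sum_{i=0}^{\nu-1}\binom{\nu}{i} h_{\nu-i} X^i$ is exactly the polynomial defined just above the lemma; the point is that the shift by $h_\nu$ turns into a polynomial in $t$ once one reorganises the binomial expansions across all $d$ equations simultaneously (this is the algebraic heart of \cite[Lemma~2.1]{BrHu}).

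The second step is to detect all $d$ congruences by additive characters: insert $p^{-d}\sum_{\va\in\F_p^d}\ep(\langle\va,\cdot\rangle)$ for the difference of the two sides. After this, the $m_j$ with $j\le s$ each contribute a factor $\sum_{m}\ep(a_1 m + \dots + a_d m^d)$ over an interval of length $O(N)$, and likewise the $m_{s+j}$ contribute the complex conjugate factor; together these give $|S(\va;2N)|^{2s}$ up to the usual harmless enlargement of the summation range to length $2N$ (one can always translate an interval of length $O(N)$ inside one of length $2N$, picking up only an absolute constant). The variable $t$ contributes $\sum_{t=1}^N \ep\big(-a_1\psi_1(\vh;t)-\dots-a_d\psi_d(\vh;t)\big)$, which by definition is $U(-\va;\vh;N)$ — here I am reading the paper's notation $U(\va,\vh;N)=\sum_{u=1}^N \ep(a_1\psi_1(\vh;u)+\dots+a_d\psi_d(\vh;u))$ and noting the minus sign comes from putting $t$ on the opposite side of the congruences from the $m_j$, $j\le s$. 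Collecting the three contributions yields
\[
N\,R_{d,s}(N;\vh) \ll \frac{1}{p^d}\sum_{\va\in\F_p^d} |S(\va;2N)|^{2s}\, U(-\va;\vh;N),
\]
which is the claimed bound after dividing by $N$.

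The routine parts I would not belabour are: checking that the change of variables $n_j=m_j+t$ together with the binomial reorganisation really produces the polynomials $\psi_\nu$ with the stated coefficients (a direct, if slightly tedious, computation identical to \cite{BrHu} once integrals are swapped for finite sums); and the bookkeeping that lets one bound the $m$-sums over an $O(N)$-interval by $|S(\va;2N)|$, using the trivial monotonicity of such incomplete sums under translation. The main obstacle — and the only genuinely delicate point — is the sign and indexing discipline in step two: one must be careful which variables sit on which side of the congruence so that the $t$-sum produces $U(-\va;\vh;N)$ rather than $U(\va;\vh;N)$, and so that exactly $s$ of the $m_j$ give $S$ and $s$ give $\overline{S}$, yielding $|S(\va;2N)|^{2s}$ and not some mismatched product. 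Because the bound only asks for $\ll$ and not an identity, the slack introduced by replacing $N$ by $2N$ in the $S$-sums and by possibly overcounting the ranges of the $m_j$ is absorbed harmlessly into the implied constant, so no sharpness needs to be tracked.
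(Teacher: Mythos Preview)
Your overall plan---translate the variables by a free parameter $t$, overcount into the range $[1,2N]$, and then detect the resulting congruences by characters---is exactly the Brandes--Hughes argument that the paper invokes, and once carried out correctly it gives the lemma immediately. But the specific shift you describe is wrong, and with it the algebra does not close up.

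You write ``substitute $n_j=m_j+t$ for $j=1,\dots,s$ and keep $n_{s+j}=m_{s+j}$''. If only the first $s$ variables are shifted, then expanding $(m_j+t)^\nu$ gives
\[
\sum_{j\le s}(m_j+t)^\nu-\sum_{j>s}m_j^\nu
=\sum_{j\le s}m_j^\nu-\sum_{j>s}m_j^\nu
+\sum_{i=0}^{\nu-1}\binom{\nu}{i}t^{\nu-i}\sum_{j\le s}m_j^{\,i},
\]
and the last sum $\sum_{j\le s}m_j^{\,i}$ cannot be expressed in terms of $\vh$ alone; it still depends on the individual $m_j$. So the right-hand side is not $\psi_\nu(\vh;t)$, and the $t$-sum does not separate off as $U(-\va;\vh;N)$.

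The correct move is to shift \emph{all} $2s$ variables by the same $t$. Then for every $i\ge 1$ the coefficient of $t^{\nu-i}$ is $\binom{\nu}{i}\bigl(\sum_{j\le s}n_j^{\,i}-\sum_{j>s}n_j^{\,i}\bigr)=\binom{\nu}{i}h_i$ by the $i$-th equation of the original system, while the $i=0$ term vanishes because there are $s$ plus and $s$ minus signs. Re-indexing gives precisely $\psi_\nu(\vh;t)=\sum_{i=0}^{\nu-1}\binom{\nu}{i}h_{\nu-i}t^{i}$. With this fix the shifted variables $m_j=n_j+t$ all lie in $[1,2N]$, the character detection yields $|S(\va;2N)|^{2s}$ from the $m_j$'s and $U(-\va;\vh;N)$ from the $t$-sum, and dividing by $N$ gives the lemma exactly as you outlined.
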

 
 The following bound is an analogue of~\cite[Lemma~3.1]{Wool-Inhom}, with a fully analogous proof 
 where we use~\eqref{eq:Mord s d} (similarly to how Lemma~\ref{lem:MVT} has been used in~\cite{Wool-Inhom}). 
 Hence we just sketch the argument.

  \begin{lem} \label{lem:MVT_SU}  Let $d \ge 3$ and let  $\ell<d$ be  the smallest index having
the property that $h_\ell \ne 0$, where $\vh =(h_1, \ldots, h_d) \in \Z^d$. 
Then for any positive integers $s \le \ell$ and $t$ we have 
\[
  \sum_{\va \in \F_p^d} |S(\va; 2N) |^{2s} |U(\va; \vh; N)|^{2t} \ll  p^dN^s\(N^{t}  + N^{2t - d+ \ell}\). 
\]
 \end{lem}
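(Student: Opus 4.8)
The plan is to mimic the argument of Wooley for \cite[Lemma~3.1]{Wool-Inhom}, replacing the Vinogradov Mean Value Theorem by the elementary moment bound~\eqref{eq:Mord s d}. Expanding the $2t$-th power of $U(\va;\vh;N)$ and the $2s$-th power of $S(\va;2N)$, the sum over $\va\in\F_p^d$ becomes (up to the factor $p^d$) the number of solutions in $1\le n_i\le 2N$ and $1\le u_j\le N$ to the system of congruences
\[
\sum_{i=1}^{s} n_i^{\nu}-\sum_{i=s+1}^{2s} n_i^{\nu}
 + \sum_{j=1}^{t}\psi_\nu(\vh;u_j)-\sum_{j=t+1}^{2t}\psi_\nu(\vh;u_j)\equiv 0\pmod p,
 \qquad \nu=1,\ldots,d.
\]
Since $\ell<d$, the top congruence $\nu=\ell$ reads, after substituting the definition of $\psi_\ell$ and using $h_1=\ldots=h_{\ell-1}=0$, $h_\ell\ne 0$,
\[
\sum_{i=1}^{s} n_i^{\ell}-\sum_{i=s+1}^{2s} n_i^{\ell}
 + h_\ell\sum_{j=1}^{t} u_j^{\ell}-h_\ell\sum_{j=t+1}^{2t} u_j^{\ell}\equiv 0\pmod p.
\]
Because all variables lie in $[1,2N]$ and $sN^k<p$ with $k\le d$, one has to be careful about wrap-around; but the standard trick is to fix the values of the power sums $\sum n_i^\nu$ for $\nu<\ell$ and $\sum u_j^\nu$ for $\nu<\ell$ first — there are $O(N^{s}\cdot N^{t})$ ways to do so via~\eqref{eq:Mord s d} applied with exponent $2s$ in the $n$-variables and a trivial count in the $u$-variables — and then count solutions of the remaining system over the integers (not mod $p$), so that the congruence in degree $\ell$ actually becomes an honest equation.

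Concretely, I would proceed as follows. First, separate the $u$-variables: trivially choose the $2t$ values $u_1,\ldots,u_{2t}\in[1,N]$, losing a factor $N^{2t}$, which already gives the weak bound; the point is to do better using cancellation in degree $\ell$. Instead, fix the ``low-degree profile'' of the $n$-variables, i.e. the $\ell-1$ power sums $\sigma_\nu=\sum_{i=1}^s n_i^\nu-\sum_{i=s+1}^{2s}n_i^\nu$ for $\nu=1,\ldots,\ell-1$; the number of $2s$-tuples $(n_i)$ with a prescribed such profile is, on average over profiles, $O(N^{2s-(\ell-1)})$, and by~\eqref{eq:Mord s d} with $s\le \ell\le d$ (so that $N^s$ dominates) the total count of $(n_i)$ is $O(N^s)$. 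With that profile fixed, the degree-$\ell$ relation together with the relations in degrees $1,\ldots,\ell-1$ forces the corresponding power-sum combination of the $u_j$ to take a bounded set of residues mod $p$; invoking Lemma~\ref{lem:MVT} (the Vinogradov bound) or rather the classical bound for $J_{\ell,t}$ in the $u$-variables gives $O(N^{t}+N^{2t-\ell(\ell+1)/2})$ choices — but this is stronger than what we need. The cleanest route is the one actually used in \cite{Wool-Inhom}: after fixing the profile, the remaining system in the $u$'s is a genuine system of $\ell$ equations over $\Z$ in $2t$ variables in $[1,N]$, whose solution count is $\ll N^{2t-\ell}\cdot$(number of ways to realise the fixed degree-$\ell$ value) $+\,N^t$, and summing over the $\ll N^{\ell-1}$ profiles of the $n$'s while keeping track of the $\sum n_i^\nu$ count via~\eqref{eq:Mord s d} yields $\ll N^s(N^t+N^{2t-d+\ell})$. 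Multiplying by $p^d$ gives the claimed bound.

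The main obstacle is exactly the passage from congruences mod $p$ to equations over $\Z$ in the degree-$\ell$ relation: one must verify that, on the range $1\le n_i,u_j\le 2N$ and under~\eqref{eq:Nk~p}, the relevant power-sum combinations are small enough in absolute value that the mod-$p$ relation implies the $\Z$-relation (possibly after a harmless shift), and that the diagonal contribution $h_\ell\ne 0$ genuinely prevents the ``diagonal'' solutions $u_j$'s pairing up from collapsing the count — this is where the hypothesis $h_\ell\ne 0$ and $s\le\ell$ are used, and where the $-d+\ell$ rather than $-\ell(\ell+1)/2$ in the exponent comes from (we only exploit cancellation in the single degree $\ell$, not in all degrees $1,\ldots,\ell$, because degrees below $\ell$ are absorbed into the profile-counting step via~\eqref{eq:Mord s d} rather than via a mean value theorem). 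Since the excerpt explicitly says the proof is ``fully analogous'' to \cite[Lemma~3.1]{Wool-Inhom} with~\eqref{eq:Mord s d} in place of Lemma~\ref{lem:MVT}, I would present only this sketch and refer to \cite{Wool-Inhom} for the routine bookkeeping.
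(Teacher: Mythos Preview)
Your proposal contains a genuine error that derails the argument. From the definition $\psi_\nu(\vh;X)=\sum_{i=0}^{\nu-1}\binom{\nu}{i}h_{\nu-i}X^i$, the polynomial $\psi_\nu$ has degree at most $\nu-1$ in $X$; since $h_1=\cdots=h_{\ell-1}=0$ one finds $\psi_1\equiv\cdots\equiv\psi_{\ell-1}\equiv 0$ and $\psi_\ell(\vh;X)=h_\ell$, a \emph{constant}. Hence in the degree-$\ell$ congruence the $u$-contribution is $\sum_{j=1}^t h_\ell-\sum_{j=t+1}^{2t}h_\ell=0$, not $h_\ell\sum_j u_j^\ell$ as you wrote. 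The first $\ell$ congruences therefore read simply $\sum_{i}(n_i^\nu-n_{s+i}^\nu)\equiv 0\pmod p$ for $\nu=1,\ldots,\ell$, with no $u$-variables present; the hypothesis $s\le\ell$ then lets~\eqref{eq:Mord s d} give $O(N^s)$ choices for $(n_1,\ldots,n_{2s})$ directly. There is no need to fix ``profiles'', no wrap-around issue, and no lifting to $\Z$ --- indeed the condition~\eqref{eq:Nk~p} you invoke is not among the hypotheses of this lemma at all.

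The second half of your sketch is also off target. With the $n$'s fixed, the remaining congruences are those of degree $\nu=\ell+1,\ldots,d$. For these one notes that $\psi_{\ell+m}(\vh;X)$ has leading term $\binom{\ell+m}{m}h_\ell X^m$ (nonzero since $h_\ell\ne 0$); the triangular relation between $\psi_{\ell+1},\ldots,\psi_d$ and $X,X^2,\ldots,X^{d-\ell}$ can be inverted (this is the ``algebraic manipulation'' borrowed from~\cite{Wool-Inhom}), yielding a system $\sum_{j}(u_j^\mu-u_{t+j}^\mu)\equiv z_\mu\pmod p$ for $\mu=1,\ldots,d-\ell$. Applying~\eqref{eq:Mord s d} together with~\eqref{eq:Triv Rh} to this system of $d-\ell$ congruences gives $O(N^t+N^{2t-(d-\ell)})$ choices for the $u$'s, which is exactly where the exponent $2t-d+\ell$ originates --- it is not, as you suggest, from ``exploiting cancellation in the single degree $\ell$'', nor is the $u$-system one of $\ell$ equations over $\Z$.
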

 
 \begin{proof} By the orthogonality of exponential functions we have
\[
  \sum_{\va \in \F_p^d} |S(\va; 2N) |^{2s} |U(\va; \vh; N)|^{2t} = p^d \fI,
 \]
 where $\fI$ is  the number of solutions to the system of congruences:
 \begin{equation}
\label{eq:System-psi}
\begin{split}
 \sum_{i=1}^s  \(n_{i}^\nu -  n_{s+i}^\nu\)  \equiv    \sum_{j=1}^t  & \left(\psi_\nu(\vh; u_j) -  \psi_\nu(\vh; u_{t+j} ) \right)\pmod p, \\
 \nu &=1, \ldots, d, 
 \end{split}
 \end{equation}
 in  integer variables $1\le n_i  \le 2N$,  $i = 1,\ldots, 2s$, and
  $1\le u_j \le N$,  $j= 1,\ldots, 2t$. 
  
  Since $h_1 = \ldots = h_{\ell-1} = 0$ we see that the polynomials 
  $\psi_j(\vh; X)$ are identical to zero for $j = 1, \ldots, \ell-1$ while 
 $ \psi_\ell(\vh; X) = h_\ell$ is a constant polynomial.  Hence we derive 
 from~\eqref{eq:System-psi} that 
\[
 \sum_{i=1}^s  \(n_{i}^\nu -  n_{s+i}^\nu\) \equiv  0  \pmod p, \qquad  \nu =1, \ldots, \ell,
\]
 and thus by our assumption $s \le \ell$ we see from~\eqref{eq:Mord s d}
 that there are $O(N^s)$ choices for $\(n_1, \ldots, n_{2s}\)$. 

Once the vector $\vn = \(n_1, \ldots, n_{2s}\)$  is fixed, the same algebraic manipulations as
in the proof of~\cite[Lemma~3.1]{Wool-Inhom} leads to the system of congruences 
\[
 \sum_{i=1}^t  \(u_{i}^\nu -  u_{t+i}^\nu\) \equiv  z_\nu  \pmod p, \qquad  \nu =1, \ldots,  d-\ell, 
\]
with some integers $z_1, \ldots, z_{d-\ell}$ depending only on $\vh$ and $\vn$. 
Therefore, by~\eqref{eq:Mord s d}  and~\eqref{eq:Triv Rh} we have 
$O\(N^{t}  + N^{2t - d+\ell}\)$ choices for $(u_1, \ldots, u_t)$. Hence 
$\fI \ll N^s\(N^{t}  + N^{2t - d+\ell}\)$, which concludes the proof.  \end{proof} 

Next we establish an analogue of~\cite[Theorems~1.1 and~1.3]{Wool-Inhom}.

We observe that in the case of rational sums we have the same saving $1/2$ against 
the trivial bound
\begin{equation}\label{eq:Rs0}
R_{d, s} (N;  \vh) \ll N^s,
\end{equation}
 implied for $s< d$ by~\eqref{eq:Mord s d} and~\eqref{eq:Triv Rh}. We note that this saving applies
 in the full range $s<d$, which shows its distinctions from   the case for Weyl sums, 
see~\cite[Theorems~1.1 and~1.3]{Wool-Inhom}.

  \begin{lem} \label{lem:MVT-Inhom-Mod_p}  Let $d \ge 3$ and let   the smallest index  $\ell$ 
with   $h_\ell \ne 0$, where $\vh =(h_1, \ldots, h_d) \in \Z^d$, satisfies  $\ell<d$. 
Then for any positive integer  $s < d$   we have 
\[
R_{d,s}(N; \vh) \ll  N^{s-1/2}. 
\] 
\end{lem}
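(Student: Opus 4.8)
The plan is to combine Lemma~\ref{lem: R SU}, which bounds $R_{d,s}(N;\vh)$ by an average of $|S(\va;2N)|^{2s}U(-\va;\vh;N)$, with the moment bound of Lemma~\ref{lem:MVT_SU}, exactly mimicking the derivation of~\cite[Theorems~1.1 and~1.3]{Wool-Inhom} but with the finite-field inputs~\eqref{eq:Mord s d} and~\eqref{eq:Triv Rh} in place of the Vinogradov mean value theorem. First I would invoke Lemma~\ref{lem: R SU} to get
\[
R_{d,s}(N;\vh) \ll \frac{1}{Np^d}\sum_{\va\in\F_p^d}|S(\va;2N)|^{2s}\,U(-\va;\vh;N).
\]
Since $U(-\va;\vh;N)$ is itself a sum of $N$ additive characters, $|U(-\va;\vh;N)|\le N$, but that only recovers~\eqref{eq:Rs0}; the point is to extract a genuine saving by applying H\"older's inequality in $\va$ so that the $U$-factor enters as $|U(\va;\vh;N)|^{2t}$ for a suitable integer $t$, at the cost of also raising $|S(\va;2N)|^{2s}$ to a larger power.

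Concretely, for a parameter $t\ge 1$ to be chosen, H\"older gives
\[
\sum_{\va}|S(\va;2N)|^{2s}|U(-\va;\vh;N)| \le \Bigl(\sum_{\va}|S(\va;2N)|^{2s}\Bigr)^{1-1/(2t)}\Bigl(\sum_{\va}|S(\va;2N)|^{2s}|U(-\va;\vh;N)|^{2t}\Bigr)^{1/(2t)},
\]
after absorbing the trivial weight $|U|$ appropriately; one then estimates the first factor by $p^d M_{d,2s}(2N)\ll p^d N^s$ from~\eqref{eq:Mord s d} (using $s<d$) and the second factor by Lemma~\ref{lem:MVT_SU}, which requires $s\le\ell$ and yields $p^d N^s(N^t+N^{2t-d+\ell})$. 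Putting these together and dividing by $Np^d$ produces a bound of the shape $N^{s-1}\bigl(N^{?}+N^{?}\bigr)^{1/(2t)}$ where the exponents depend on $t,\ell,d$; optimising over the integer $t$ in the admissible range should drive the total down to $N^{s-1/2}$. The case $s>\ell$ (which can occur since we only assume $s<d$ and $\ell<d$) would be handled separately: there one cannot apply Lemma~\ref{lem:MVT_SU} with the full exponent $s$, so instead I would split $|S(\va;2N)|^{2s}=|S(\va;2N)|^{2\ell}|S(\va;2N)|^{2(s-\ell)}$, bound the second block pointwise by the Weil estimate~\eqref{eq:Weil} (or~\eqref{eq:Weil-Incompl}) when $N$ is large, or by the trivial bound $|S(\va;2N)|\le 2N$ when $N$ is small, and apply the H\"older/Lemma~\ref{lem:MVT_SU} machinery to the $|S(\va;2N)|^{2\ell}$ block; the dichotomy in $N$ should again leave $N^{s-1/2}$ as the worst case.

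The main obstacle I anticipate is bookkeeping the exponents so that the optimisation over $t$ genuinely closes at the clean value $N^{s-1/2}$ uniformly in $\ell$ and in the regime of $N$ versus $p$: the term $N^{2t-d+\ell}$ from Lemma~\ref{lem:MVT_SU} is only an improvement over $N^t$ when $t<d-\ell$, so the choice of $t$ is constrained, and one has to check that even the best available integer $t$ suffices — this is precisely the place where in~\cite{Wool-Inhom} the diagonal/non-diagonal balance is delicate, and the finite-field version inherits that delicacy. A secondary technical point is making the H\"older step legitimate when $2t$ does not divide things evenly and ensuring the sub-range $s>\ell$ does not produce a weaker exponent; since the saving $1/2$ is claimed in the \emph{entire} range $s<d$ (the remark preceding the lemma stresses this is stronger than the Weyl-sum case), I would need to verify that the large-$s$ argument is not the bottleneck, which I expect to follow from the extra room afforded by~\eqref{eq:Mord s d} holding for all real $s\ge 1$ rather than only in a restricted range.
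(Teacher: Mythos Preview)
Your overall architecture --- Lemma~\ref{lem: R SU}, then H\"older, then Lemma~\ref{lem:MVT_SU} combined with~\eqref{eq:Mord s d} --- is exactly the paper's, and for $s\le\ell$ your weighted H\"older with the choice $t=d-\ell$ does give
\[
R_{d,s}(N;\vh)\ \ll\ N^{s-1}\bigl(N^{t}+N^{2t-d+\ell}\bigr)^{1/(2t)}\ \ll\ N^{s-1/2},
\]
matching the target.

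The genuine gap is in the range $s>\ell$. Bounding $|S(\va;2N)|^{2(s-\ell)}$ \emph{pointwise}, whether by $(2N)^{2(s-\ell)}$ or by Weil, discards the averaging over $\va$ and is too crude. With the trivial pointwise bound your own machinery, applied to the remaining $|S|^{2\ell}$ block, yields only
\[
R_{d,s}(N;\vh)\ \ll\ \frac{1}{Np^d}\,(2N)^{2(s-\ell)}\sum_{\va\in\F_p^d}|S(\va;2N)|^{2\ell}\,|U(-\va;\vh;N)|\ \ll\ N^{2(s-\ell)}\cdot N^{\ell-1/2}=N^{2s-\ell-1/2},
\]
which for $s>\ell$ is strictly weaker than $N^{s-1/2}$; the Weil alternative introduces an unwanted power of $p$ and is not even valid at $\va=\mathbf{0}$. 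So the ``dichotomy in $N$'' does not close at $N^{s-1/2}$.

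The paper's fix is to apply H\"older differently so that the excess $|S|$--power lands in a \emph{moment} rather than a supremum. With $r=2(d-\ell)$ one writes $|S|^{2s}=|S|^{2u/r}\cdot|S|^{2v(r-1)/r}$, where $u=\min\{rs,\ell\}\le\ell$ and $v=(rs-u)/(r-1)$, and obtains
\[
R_{d,s}(N;\vh)\ \ll\ \frac{1}{Np^d}\,W_1^{1/r}W_2^{1-1/r},\qquad W_1=\sum_{\va}|S|^{2u}|U|^{r},\quad W_2=\sum_{\va}|S|^{2v}.
\]
Now Lemma~\ref{lem:MVT_SU} (with $t=r/2=d-\ell$) bounds $W_1$, and crucially~\eqref{eq:Mord s d} gives $W_2\ll p^d(N^v+N^{2v-d})$; a short computation (using only $s<d$ and $\ell<d$) shows $v\le d$, so $W_2\ll p^d N^{v}$ and one recovers $N^{s-1/2}$ uniformly in $s<d$. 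In short: do not take a sup over $\va$ on the excess $|S|$--block; send it into its own H\"older factor and estimate it by the mean value~\eqref{eq:Mord s d}.
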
 
 
 \begin{proof} We  set 
 \[
r = 2(d- \ell), \qquad u = \min\{rs, \ell\},  \qquad  v = \frac{rs - u}{r-1}.
\]
Note that $r\ge 2$ is an even integer and  $u, v\ge 0$ with
\[
\frac{2u}{r} + 2v\frac{r-1}{r} = 2s.
\]
 
Hence, using Lemma~\ref{lem: R SU},   we have 
 \begin{equation}
\label{eq:RW1W2}
 R_{d,s}(N; \vh) \ll \frac{1}{N p^{d}}   W_1^{1/r}W_2^{1-1/r}, 
 \end{equation}
 where 
 \begin{align*}
&W_1 =  \sum_{\va \in \F_p^d} |S(\va; 2N) |^{2u} |U(\va; \vh; N)|^r, \\
& W_2 =  \sum_{\va \in \F_p^d} |S(\va; 2N) |^{2v}  . 
\end{align*}
Using Lemma~\ref{lem:MVT_SU} (which applies since $u \le \ell$ and $r$ is an even integer)  
and the bound~\eqref{eq:Mord s d}, we derive 
\[
W_1 \ll p^dN^{u + r/2} \mand W_2 \ll p^d\(N^{v} + N^{2v-d}\), 
\]
respectively. Substituting this bound in~\eqref{eq:RW1W2} yields
 \begin{equation}
 \begin{split}
\label{eq:R-bound}
 R_{d,s}(N; \vh) & \ll \frac{1}{N}  N^{u/r+ 1/2 + v(1-1/r)}\(1 +   N^{v-d}\)^{{1-1/r}}\\
& =  N^{s-1/2} \(1 +   N^{v-d}\)^{1-1/r}. 
\end{split}
 \end{equation}  

We now observe that $v \le d$, which is  equivalent to 
 \begin{equation}
\label{eq:u large}
u\ge rs - d (r-1).
 \end{equation}
 
The inequality~\eqref{eq:u large} is  obvious if $u= rs$, so we can only consider $u = \ell$, in 
which case~\eqref{eq:u large}  is equivalent to  
\[
\ell \ge 2(d-\ell) s - d\(2(d-\ell)-1\) = -2(d-\ell)(d-s) + d.  
\]
In turn, this  is equivalent to 
\[
\(d-\ell\)\(2(d-s) - 1\)\ge 0
\]
which always holds for $\ell \ge d$ and $s < d$. Thus the bound~\eqref{eq:R-bound} yields the desired result. 
 \end{proof} 
 
 \subsection{Restricted mean value theorems and congruences}
 
 We need the following discrete analogue of~\cite[Lemma~3.8]{CKMS}. 

\begin{lemma}
\label{lem:MVT2Ineq}
  Let $I$ be an interval and $\varphi_1,\varphi_2,\ldots \varphi_{d}$ real valued functions on $I$. For any   integers $s\ge 1$ and positive integers  $H_1,\ldots,H_d < p$ 
and sequence of complex numbers $\gamma_n$ satisfying $|\gamma_n|\le 1$ we have 
\[
\frac{1}{H_1\ldots H_d}\sum_{a_1=1}^{H_1}\ldots \sum_{a_d=1}^{H_d}\left|\sum_{n\in I} 
\gamma_n \ep\left(\sum_{i=1}^{d}a_i\varphi_i(n) \right)\right|^{2s} \ll \sfT, 
\]
where $\sfT$ is the number of solutions to the system of $d$ congruences 
\[
\sum_{j=1}^{2s} (-1)^j \varphi_i\(n_j\) \equiv u_i  \pmod p, 
\qquad i =1, \ldots, d,
\]
in integer variables $ n_1,\ldots,n_{2s}\in I$, $ u_i \in [-p/H_i, p/H_i]$,  $i =1, \ldots, d$. 
\end{lemma}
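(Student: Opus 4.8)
The plan is to reduce the mean value over the box $[1,H_1]\times\cdots\times[1,H_d]$ to a counting problem by expanding the $2s$-th power and executing the sum over each $a_i$ separately. First I would write
\[
\left|\sum_{n\in I}\gamma_n\ep\!\left(\sum_{i=1}^d a_i\varphi_i(n)\right)\right|^{2s}
=\sum_{n_1,\dots,n_{2s}\in I}\left(\prod_{j=1}^{2s}\gamma_{n_j}^{(-1)^{j-1}}\right)\ep\!\left(\sum_{i=1}^d a_i\sum_{j=1}^{2s}(-1)^{j-1}\varphi_i(n_j)\right),
\]
where I conjugate alternate factors in the usual way so that the $n_j$ with odd $j$ carry $\gamma_{n_j}$ and those with even $j$ carry $\overline{\gamma_{n_j}}$. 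Since $|\gamma_n|\le 1$, the product of the $\gamma$'s is bounded by $1$ in absolute value, so after taking absolute values inside we may discard it. It then remains to estimate, for each fixed tuple $(n_1,\dots,n_{2s})$, the average over the box of the pure additive character; the $a_i$-sums factor completely.

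The heart of the argument is the elementary bound for the incomplete geometric-type sum
\[
\frac{1}{H_i}\sum_{a_i=1}^{H_i}\ep(a_i m_i)\ll \min\left\{1,\frac{1}{H_i\|m_i/p\|}\right\},
\]
where $\|x\|$ denotes the distance to the nearest integer and $m_i=\sum_{j=1}^{2s}(-1)^{j-1}\varphi_i(n_j)$, interpreted modulo $p$. Multiplying these over $i=1,\dots,d$ and summing over all tuples $(n_1,\dots,n_{2s})$, I get that the left-hand side is
\[
\ll\sum_{n_1,\dots,n_{2s}\in I}\ \prod_{i=1}^d\min\left\{1,\frac{1}{H_i\|m_i/p\|}\right\}.
\]
Now I split the residue $m_i\bmod p$ according to the least residue $\langle m_i\rangle_p\in(-p/2,p/2]$: the factor $\min\{1,(H_i\|m_i/p\|)^{-1}\}$ is $\ll 1$ when $|\langle m_i\rangle_p|\le p/H_i$, and for larger values it decays like $p/(H_i|\langle m_i\rangle_p|)$. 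Dyadically decomposing the range $p/H_i<|\langle m_i\rangle_p|\le p/2$ into intervals $|\langle m_i\rangle_p|\asymp 2^{k}p/H_i$ and using that the number of tuples with each $m_i$ pinned to a fixed residue class is, by reindexing, at most the relevant count $\sfT$ with shifted windows — each dyadic block contributes a factor $O(2^{k})$ worth of residues $u_i$ but a decay factor $O(2^{-k})$ — the geometric series over $k$ converges after paying only an implied constant (absorbing the $O(1)$ factor per coordinate into the constant depending on $d$). This yields the stated bound by $\sfT$, the count of solutions with $u_i\in[-p/H_i,p/H_i]$.

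The step I expect to require the most care is the dyadic bookkeeping in the last paragraph: one has to check that summing over the dyadic scales in each of the $d$ coordinates independently really does telescope to a single copy of $\sfT$ (up to the constant depending only on $d$), rather than losing a factor like $(\log p)^d$. The clean way to see this is to note that for a fixed choice of which dyadic block each $m_i$ falls into, the number of lattice tuples is bounded by $\sfT$ evaluated with the window $[-2^{k_i}p/H_i,2^{k_i}p/H_i]$ in coordinate $i$, and such a count is itself $\ll 2^{k_1+\cdots+k_d}\sfT$ by a pigeonhole/translation argument partitioning the larger window into $O(2^{k_i})$ translates of $[-p/H_i,p/H_i]$; multiplying by the decay $2^{-(k_1+\cdots+k_d)}$ and summing the resulting geometric series over all $(k_1,\dots,k_d)$ gives a convergent sum with value $\ll\sfT$. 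Everything else — the expansion, discarding the $\gamma_n$, and the single-variable geometric sum estimate — is entirely routine and mirrors the continuous argument of~\cite[Lemma~3.8]{CKMS} with integrals over the unit cube replaced by normalized sums over $\F_p$.
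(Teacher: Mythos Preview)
Your expansion and the bound $\bigl|H_i^{-1}\sum_{a_i=1}^{H_i}\ep(a_i m_i)\bigr| \le \min\{1, (H_i\|m_i/p\|)^{-1}\}$ are fine, but the dyadic bookkeeping at the end does not close, and in fact cannot. There are two problems. First, the ``pigeonhole/translation'' step is unjustified: covering $[-2^{k_i}p/H_i, 2^{k_i}p/H_i]$ by $O(2^{k_i})$ translates of $[-p/H_i,p/H_i]$ only helps if each translated window has count $\le\sfT$, and this is false in general (take $d=s=1$, $I=\{1,2,3\}$, $\varphi(1)=\varphi(2)=0$, $\varphi(3)=L$, window of half-width $L-1$: the centred count is $5$ while a shift by $1$ picks up the mass at $c=L$ and gives $7$). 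Second, and more fundamentally, even granting a bound of the shape $2^{\sum k_i}\sfT$ for the enlarged window, you are then summing $O(1)\cdot\sfT$ over $O\bigl(\prod_i\log H_i\bigr)$ dyadic blocks --- not a convergent geometric series but $\asymp(\log p)^d$ equal terms. The loss is intrinsic to the route: once you pass to the pointwise $\min$ bound, the intermediate quantity $\sum_{(n_j)}\prod_i\min\{1,(H_i\|m_i/p\|)^{-1}\}$ can genuinely be as large as $\sfT(\log p)^d$. For instance with $d=s=1$, $\varphi(n)=n$ and $I=\{1,\dots,p\}$ one has $r(c)=\#\{(n_1,n_2):n_2-n_1\equiv c\}=p$ for every residue $c$, whence the intermediate sum equals $p\sum_{|c|\le p/2}\min\{1,p/(H|c|)\}\asymp (p^2/H)\log H\asymp\sfT\log H$. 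So the sharp estimate $\ll\sfT$ is unrecoverable along this path.

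The paper avoids the tail altogether: it majorises the indicator of $1\le a_i\le H_i$ by a smooth nonnegative weight $F(a_i/H_i)$ with $F(x)\gg 1$ on $[-1,1]$ and $\supp\widehat F\subseteq[-1,1]$, and extends the $a_i$-sum to all of $\Z$ (this is harmless since $|S|^{2s}\ge 0$). Poisson summation then yields
\[
H_i^{-1}\sum_{a_i\in\Z}F(a_i/H_i)\,\e(a_i m_i/p)=\sum_{b\in\Z}\widehat F\bigl((b-m_i/p)H_i\bigr),
\]
which is $O(1)$ when $\|m_i/p\|\le 1/H_i$ and \emph{exactly zero} otherwise, by the compact support of $\widehat F$. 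The contribution from tuples outside the window $|u_i|\le p/H_i$ therefore vanishes identically, and the bound $\ll\sfT$ is immediate with no log factors. Replacing your geometric-sum bound by this smooth majorant/Poisson step is the missing idea.
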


\begin{proof}
Let $F$ be a positive smooth function with sufficient decay satisfying  
$$
F(x)\gg 1 \ \ \text{if} \ \  |x|\le 1 \mand   \supp \widehat F\subseteq [-1,1], 
$$
where $  \supp \widehat F = \{x\in \R:~ \widehat F(x) \ne 0\}$ and  
\[
 \widehat F(x) = \int_{-\infty}^\infty F(y) \e(-xy) dy
\]
is the Fourier transform of $F$ and $\e(z) = \exp(2 \pi i z)$. 
A concrete examples is given by 
\[
F(x) = \begin{cases}\displaystyle{ \(\frac{ \sin \pi x}{\pi x}\)^2}, & \text{if}\ x \ne 0,\\
1, &  \text{if}\ x = 0,
\end{cases}
\]
see the proof of~\cite[Lemma~2.1]{Watt}. 

By Poisson summations, for any $\gamma \in \R$, 
\begin{align*}
\sum_{a \in \Z} F(a/H) \e(\gamma a)& = H \sum_{a \in \Z} \widehat F\((a-\gamma) H\)\\
& \ll \# \{n \in \Z: ~|n-\gamma| \le H^{-1}\} 
=  \begin{cases}  1, & \text{if} \ \|\gamma\| \le H^{-1},\\
0, & \text{otherwise}. 
\end{cases}
\end{align*}
where 
\[
 \|\gamma\| = \min_{k \in \Z} |\gamma - k|
 \]
is the distance between $\gamma$ and the closest integer. 

Expanding the $2s$-th power, interchanging summation, recalling the assumption $|\alpha_n|\le1$, 
this immediately implies the desired result. 
\end{proof}


\section{Proofs of main results}

\subsection{Proof of Theorem~\ref{thm:MVT-Mod_p large s}}

We recall that $M_{d,2s}(N)  = R_{d,s}(N)$, see~\eqref{eq:Triv Rh}. 
In fact from the condition~\eqref{eq:Nk~p} we see that the congruences corresponding 
to the powers $j =1, \ldots, k$ are in fact equations. Furthermore,  for $j = k+1, \ldots, d$
the corresponding congruence can be written as 
\[
 n^{j}_{1}+\ldots+n^{j}_{s} - n^{j}_{s+1}+\ldots+n^{j}_{2s}  = m_j p
 \]
 with some integers $m_j$ satisfying 
\begin{equation}
\label{eq:mj}
|m_j| \le sN^j/p, \qquad j = k+1, \ldots, d.
\end{equation}

Therefore we now see that 
\begin{equation}
\label{eq:R vs Jmj}
R_{d,s}(N)  \le \sum_{\substack{\vm = (0, \ldots, 0, pm_{k+1}, \ldots, pm_d) \in \Z^{d}\\|m_j| \le sN^j/p, \  j = k+1, \ldots, d}}
 J_{d,s}(N; \vm) .
\end{equation}

We see from~\eqref{eq:mj} and the condition~\eqref{eq:Nk~p}  that the number 
for all possible choices of $\vm$ is dominated by
\[
\prod_{j=k+1}^d \(2sN^j/p +1\) \ll p^{-d+k}  N^{d(d+1)/2   -  k(k+1)/2}.
\]  

Combining~\eqref{eq:R vs Jmj} with~\eqref{eq:Triv Jh} and using Lemma~\ref{lem:MVT}, 
after simple calculations we obtain the result.

\subsection{Proof of Theorem~\ref{thm:MVT-Mod_p med s}}

We appeal to the bound~\eqref{eq:R vs Jmj} again. 

Let  $U_\ell$, $\ell =k+1, \ldots, d$,  be the contribution to the sum in~\eqref{eq:R vs Jmj}  from the non-zero 
vectors  $\vm = (0, \ldots, 0, pm_{k+1}, \ldots, pm_d ) \in \Z^d$ for which $\ell$ is the smallest index having
the property that $m_\ell \ne 0$, and such $\ell$ exists. Clearly the contribution from the zero vector $\vm = \mathbf 0$ 
is  $J_{d,s}(N)$. Furthermore, we use~\eqref{eq:Triv Jh} and Lemma~\ref{lem:MVT} to estimate 
$U_d$, and use  Lemma~\ref{lem:MVT-Inhom-Z} to estimate  $U_\ell$ for $k+1 \le \ell \le d-1$. This yields the bound 
\begin{align*}
R_{d,s}(N)  & \le N^{d+s+o(1)}p^{-1} + \sum_{\ell = k+1}^{d-1} N^{d(d+1)/2 - \ell(\ell-1)/2+s-1+o(1)}p^{-d+\ell-1}\\
 & = N^{d+s+o(1)}p^{-1} + N^{d(d+1)/2 +s-1+o(1)}p^{-d-1}
  \sum_{\ell = k+1}^{d-1} N^{ - \ell(\ell-1)/2}p^{\ell}. 
\end{align*}

To analyse the last sum we define the function we notice that its terms are of monotonically decreasing
order.
Indeed, we note that 
\[
 N^{ - \ell(\ell-1)/2}p^{\ell} \gg  N^{ -  \ell (\ell+1)/2}p^{\ell+1}
 \]
 is equivalent to $N^{\ell}  \gg  p$,  which holds for $\ell \ge k+1$ by the choice of $k$. 
 Hence we now conclude that 
 \[
 R_{d,s}(N)   \le   N^{d+s+o(1)}p^{-1} + N^{d(d+1)/2 +s - k(k+1)/2- 1+o(1)}p^{-d+k},
\]
which by~\eqref{eq:Triv Rh} is equivalent to the desired statement. 

\subsection{Proof of Theorem~\ref{thm:small_box}}

Given a  sequence of complex numbers $\bgamma = \(\gamma_n\)_{n=1}^N$ satisfying $|\gamma_n|\le 1$,  we 
define more general sums 
\[
S(\va, \bgamma; N) = \sum_{n =1}^N \gamma_n \ep\(a_1n+ \ldots + a_dn^d\), 
\]
and study their moments 
\begin{equation}
\label{eq:Weight Mon}
 M_{d,2s} (\fB, \bgamma; N) = \frac{1}{H^d} \sum_{\va \in\fB} |S(\va, \bgamma; N) |^{2s} ,
\end{equation}
where now without loss of generality we can assume that 
$\fB = [1, H]^d$.

We follow the ideas of~\cite[Section~5]{BrChSh} and define 
\[
\cU = [-p/H, p/H]^d \cap \Z^d  
\]

 We have the obvious partition 
\[
	\cU  = \{\bm 0\} \cup \cG \cup \cH,
\]
where  
\[
	\cG = \{\vh \in \cU:~\vh = (0,\ldots,0,   h_d), \; h_d \neq 0\}.
\]
and $\cH$ contains all remaining vectors $\vh \in \cU $, that is, 
\[
\cH = \cU \setminus \(\{\bm 0\} \cup \cG \)
\]

So,  by Lemma~\ref{lem:MVT2Ineq} 
\begin{equation}\label{eq:mv-partition}
\begin{split} 
 M_{d,2s} (\fB, \bgamma; N) & \ll  \sum_{\vh \in \cU} R_{d,s}(N; \vh)\\
& = R_{d,s} (N) + \sum_{\vh \in \cG} R_{d,s} (N; \vh) + \sum_{\vh \in \cH} R_{d,s} (N; \vh).
\end{split} 
\end{equation}

For the first term  $R_{d, s} (N)$ we use the bound~\eqref{eq:Rs0}.
Namely, we obtain that  for $s<d$, 
\begin{equation} \label{E1K} R_{d, s}(N) \ll N^s. \end{equation}
To estimate the contributions from the other terms, 
 we  record the following analogue of~\cite[Lemma~5.3]{BrChSh}, 
which follows from the Cauchy inequality. Namely, for $d \ge 2$
and  any finite set $\cS \subseteq \F_p^d$ we have
\begin{equation}\label{eq: Inhom VMVT-Cauchy}
		\sum_{\vh \in \cS} R_{d, s} (N; \vh) \le \(\# \cS R_{d, 2s} (N)\)^{1/2}. 
\end{equation}

 Next, we observe that 
\[
	\# \cG \ll  p/H \mand \# \cH \ll   (p/H)^d. 
\]

Hence we see from~\eqref{eq:Rs0} 
(which takes the form $M_{d,2s}(N) \ll N^s$ for $s\le d$) and~\eqref{eq: Inhom VMVT-Cauchy} together with~\eqref{eq:Mord s d} 
and~\eqref{eq:Triv Rh} that 
\begin{equation}\label{eq: Contr G}
\begin{split}
		\sum_{\vh \in \cG} R_{d,s} (N; \vh) 
		& \ll  \min\left\{\# \cG N^s,   \(\#\cG\(N^{2s} + N^{4s-d}\)\)^{1/2} \right\}\\
		& \ll  \min\left\{   pH^{-1} N^{s},   p^{1/2} H^{-1/2}  \(N^{s} + N^{2s-d/2}\)\right\}.
		 \end{split}
\end{equation}

Similarly, using Lemma~\ref{lem:MVT-Inhom-Mod_p} and  the bound~\eqref{eq:Mord s d}, we see that that 
\begin{equation}\label{eq: Contr H}
\begin{split}
\sum_{\vh \in \cH}  R_{d,s} & (N; \vh) \\
& \ll 
\min\left\{(p/H)^d N^{ s-1/2} ,    (p/H)^{d/2}   \(N^s + N^{2s-d/2}\)\right\}.
\end{split}
\end{equation}

Thus, substituting~\eqref{E1K}, \eqref{eq: Contr G} and~\eqref{eq: Contr H}  in~\eqref{eq:mv-partition} 
and observing that the bound~\eqref{eq: Contr G}  
dominates~\eqref{E1K}, we derive 
\begin{align*}
 M_{d,2s} (\fB, \bgamma; N) & \ll  \min\left\{   pH^{-1} N^{s},   p^{1/2} H^{-1/2}  \(N^{s} + N^{2s-d/2}\)\right\} \\
 & \quad \quad + \min\left\{(p/H)^d N^{ s-1/2} ,    (p/H)^{d/2}   \(N^s + N^{2s-d/2}\)\right\}, 
\end{align*}
which concludes the proof.

 \section{Comments}
 
Some of the tools and ideas of this paper can also be used to study a  variety of other 
problems on rational exponential sums.  Namely, 
given  a non-negative integer $k \le d$ and a permutation $\pi \in \sfS_d$ of the symmetric group of permutations on 
$d$ symbols and vectors $\vb = \(b_1, \ldots, b_k\) \in \F_p^k$ and 
 $\vc = \(c_1, \ldots, c_{d-k}\) \in \F_p^{d-k}$ we define the {\it maximal\/} operator
 \[
 T_{d,\pi, k}(\vb; N) = \max_{\vc \in  \F_p^{d-k}}\left|  \sum_{n =1}^N 
 \ep\(\sum_{i=1}^k  b_in^{\pi(i)} + \sum_{j=1}^{d-k} c_j n^{\pi(k+j)} \)
 \right|. 
 \]
 A similar quantity has been studied quite extensively in the context of the classical Weyl sums, 
 see~\cite{Bak,Barr,BaChSh1, BPPSV, BrSh, ChSh-IMRN, ChSh-MMJ, ErdSha} for bound on maximal operators, 
as well as the references therein. Investigating the above maximal operators $ T_{d,\pi, k}(\vb; N)$ 
for 
rational exponential sums is a very interesting and natural question.  Even the case of the identical permutation $\pi(i) = i$, $i=1, \ldots, d$, is already interesting. 

One can also study averaging over parametric curves. For example, given $d$ polynomials $f_1, \ldots, f_d \in \F_p[X]$ one can ask about the average values $M_{d,k} (\fU; N)$ given by~\eqref{eq:MU} for parametric curves
\[
\fU =  \{(f_1(t), \ldots, f_d(t)):~ t \in \F_p\},
\]
in particular for the moment curve~\eqref{eq:MomCurve}.

 \section*{Acknowledgement}

The authors would like to thank Bryce Kerr for a suggestion to use Poisson summation in the 
proof of Lemma~\ref{lem:MVT2Ineq}. 

This work started during a very enjoyable visit of both authors to  Seoul National University whose 
 hospitality and support  are gratefully acknowledged. 
 During the preparation of this work, D.K. was supported by the National Research Foundation of Korea
(NRF) grant funded by the Korea government (MSIT) (NO. RS-2023-00249597)
and I.S. was supported, in part, by the Australian Research Council Grants DP230100530 and DP230100534.


\begin{thebibliography}{999}

 \bibitem{Bak} R. C. Baker, `$L^p$ maximal estimates for quadratic Weyl sums', 
 \textit{Acta Math. Hungar.},  \textbf{165} (2021), 316--325.
 
 
\bibitem{BaChSh1} R. C. Baker, C. Chen and I. E. Shparlinski,  
`Bounds on the norms of maximal operators on Weyl sums', 
\textit{J. Number Theory},  \textbf{256} (2024) 329--353.


\bibitem{BaChSh2} R. C. Baker, C. Chen and I. E. Shparlinski,  
`Large Weyl sums and Hausdorff dimension', 
 \textit{J. Math. Anal. Appl.}, \textbf{510} (2022), Art.~26030.

\bibitem{Barr} A. Barron, `An $L^4$ maximal estimate for quadratic Weyl sums', 
 \textit{Intern. Math. Res. Notices}, \textbf{2022} (2022),   17305--17332.

\bibitem{Bourg1} J. Bourgain,  
`On the Vinogradov mean value', 
 \textit{Proc. Steklov Math. Inst.},  \textbf{296} (2017),  30--40.
 
 \bibitem{Bourg2} J. Bourgain, `Decoupling inequalities and some mean-value theorems', 
 \textit{J.  d'Anal.  Math.},  \textbf{133} (2017), 313--334.
 
\bibitem{BDG} J. Bourgain, C. Demeter and L. Guth, 
`Proof of the main conjecture in Vinogradov's mean value theorem for degrees higher than three', 
{\it Ann.\ Math.\/},  \textbf{184} (2016), 633--682. 

\bibitem{BrChSh} J. Brandes, C. Chen and I. E. Shparlinski, 
`Local mean value estimates for Weyl sums', 
 \textit{Rev. Math. Iber.},  \textbf{40} (2024),  2095--2134.
 
 \bibitem{BrHu} J. Brandes and K. Hughes, 'On the inhomongeneous Vinogradov system',
\textit{Bull. Aust. Math. Soc.}, \textbf{106} (2022), 396--403.  

\bibitem{BPPSV} J. Brandes, S. T. Parsell, C. Poulias, G. Shakan and R. C. Vaughan,
`On generating functions in additive number theory, II: Lower-order terms and applications to PDEs', 
 \textit{Math. Ann.},  \textbf{379} (2021), 347--376.
 
\bibitem{BrSh} J. Brandes and I. E. Shparlinski, 
'Two-dimensional Weyl sums failing square-root cancellation along lines', 
 \textit{Arkiv Matem.},  \textbf{61} (2023), 267--276.

\bibitem{CKMS} C. Chen, B. Kerr, J. Maynard and I. E. Shparlinski, 
 `Metric theory of Weyl sums', \textit{Math. Ann.}, \textbf{385} (2023), 
 309--355.
 
  \bibitem{ChSh-AMH} C. Chen and I. E. Shparlinski, `On a hybrid version of the Vinogradov mean value theorem', 
 \textit{Acta Math. Hungar.}, \textbf{163} (2021), 1--17.
 
\bibitem{ChSh-IMRN} C. Chen and I. E. Shparlinski,
`New bounds of Weyl sums',  
{\it   Int. Math. Research Notices},  \textbf{2021}  (2021),  8451--8491.   

\bibitem{ChSh-MMJ}   C. Chen and I. E. Shparlinski,
`Hybrid bounds on two-parametric families of Weyl sums along smooth curves', 
 {\it Michigan Math. J.\/}, \textbf{73} (2023), 123--139. 
 
 \bibitem{ChSh-QJM} C. Chen and I. E. Shparlinski,
`Restricted mean value theorems and 
metric theory of restricted Weyl sums', 
 \textit{Quart. J. Math.},    \textbf{72} (2021), 885--919.

\bibitem{ChSh-JMAA} C. Chen and I. E. Shparlinski,
`Small values of Weyl sums', 
 \textit{J. Math. Anal. Appl.},  \textbf{495}  (2021),  Art.~124743. 
 
 \bibitem{CGOS}
J.~Cilleruelo, M.~Z. Garaev, A.~Ostafe, and I.~E. Shparlinski, 
`On the concentration of points of polynomial maps and applications'.
 \textit{Math. Z.},  \textbf{272} (2012), 825--837.
 
  \bibitem{DGW} C. Demeter, L. Guth and H. Wang, `Small cap decouplings', 
  \textit{Geom. Funct. Anal.}, \textbf{20} (2020), 989--1062.
 
  \bibitem{DeLa} 
C.  Demeter and B. Langowski, 
`Restriction of exponential sums to hypersurfaces', 
\textit{Intern. Math. Res. Notices}, \textbf{2023} (2023),  1292--1354. 

\bibitem{ErdSha} 
M. B. Erdo{\v g}an and G. Shakan,  
`Fractal solutions of dispersive partial differential equations on the torus',  
 \textit{Selecta Math.}, \textbf{25} (2019),  Art.~11, 1--26.  

\bibitem{GuMa} L. Guth and D. Maldague, `Small cap decoupling for the moment curve in $\mathbb{R}^3$', 
\textit{Analysis and PDE},  \textbf{17} (2024),  3551--3588.

\bibitem{IwKow} H. Iwaniec and E. Kowalski,
\textit{Analytic number theory}, Amer.  Math.  Soc.,
Providence, RI, 2004. 


\bibitem{Kerr} B. Kerr, `Incomplete Gauss sums modulo primes', 
 \textit{Quart. J. Math.},  \textbf{69} (2018), 729--745.

\bibitem{Li}  W.-C. W. Li, {\it Number theory with applications\/},
World Scientific,  Singapore, 1996.

\bibitem{MaOh} D. Maldague and C. Oh, `On the small cap decoupling for the moment curve in  
$\mathbb{R}^3$', 
\textit{Preprint}, 2024, available at \url{https://arxiv.org/abs/2411.18016}.

\bibitem{Mor} L. J. Mordell. `On a sum analogous to a Gauss's sum', 
 \textit{Quart. J.  Math.},  \textbf{3} (1932), 161--167. 

\bibitem{OhYe}
 C. Oh and K. Yeon, `An extended Vinogradov's mean value theorem', 
 \textit{Preprint}, 2024. 

\bibitem{Watt}
N. Watt, `Exponential sums and the Riemann zeta-function II', 
 \textit{ J. London Math. Soc.},   \textbf{39} (1989), 385--404.

\bibitem{Weil} A. Weil,
`On some exponential sums', 
 \textit{Proc. Nat. Sci. Acad. Sci U.S.A.},
 \textbf{34} (1948), 204--207.

\bibitem{Wool-Nest} T.~D.~Wooley, 
`Nested efficient congruencing and relatives of Vinogradov's mean value theorem',
 \textit{Proc. London Math. Soc.},  \textbf{118} (2019), 942--1016.

\bibitem{Wool-Inhom} T.~D.~Wooley, 
`Subconvexity in inhomogeneous Vinogradov systems', 
 \textit{Quart. J. Math.}, \textbf{74} (2023), 389--418. 

%
%
%
%
%
%
%
%
%
%
%
%
%
%
%
%
%
%
%
%
%
%
%
%
%
%
%
%
%
%
%
%
%
%

\end{thebibliography}
\end{document}